\providecommand{\U}[1]{\protect\rule{.1in}{.1in}}
\begin{document}
\title{Spectra for compact quantum group coactions and crossed products}
\author{Raluca Dumitru, Costel Peligrad}
\address{Raluca Dumitru: Department of Mathematics and Statistics, University of North
Florida, 1 UNF Drive, Jacksonville, Florida 32224; Institute of Mathematics of
the Romanian Academy, Bucharest, Romania; E-mail address: raluca.dumitru@unf.edu\\
Costel Peligrad: Department of Mathematical Sciences, University of
Cincinnati, 610A Old Chemistry Building, Cincinnati, OH 45221; E-mail address: costel.peligrad@uc.edu}
\thanks{The first author was supported by a UNF Summer Research Grant}
\subjclass[2000]{47L65, 20G42}
\maketitle

\begin{abstract}
We present definitions of both Connes spectrum and strong Connes spectrum for
actions of compact quantum groups on C*-algebras and obtain necessary and
sufficient conditions for a crossed product to be a prime or a simple
C*-algebra. Our results extend to the case of compact quantum actions the
results in \cite{glp} which in turn, generalize results by Connes, Olesen and
Pedersen and Kishimoto for abelian group actions. We prove in addition that
the Connes spectra are closed under tensor products. These results are new for
compact nonabelian groups as well.

\end{abstract}

\ 

\ 

\newtheorem{definition}{Definition}[section] \newtheorem{lemma}[definition]{Lemma}

\newtheorem{proposition}[definition]{Proposition}
\newtheorem{theorem}[definition]{Theorem}
\newtheorem{corollary}[definition]{Corollary} \theoremstyle{definition} \newtheorem{remark}[definition]{Remark}

\section{Introduction}

In his fundamental paper \cite{connes}, Connes defines the invariant $\Gamma$
called, in his name, the Connes spectrum, for abelian group actions on von
Neumann algebras. Among other results, he obtained conditions for a crossed
product to be a factor. Subsequently, Olesen and Pedersen \cite{pedersen} have
defined the Connes spectrum for abelian group actions on C*-algebras. They
have extended the results of Connes to the case of crossed products of
C*-algebras by abelian group actions obtaining conditions that such a crossed
product be a prime C*-algebra. However, the similar result for simple crossed
products using the Olesen-Pedersen version of Connes spectrum is false. In
\cite{kishimoto}, Kishimoto has shown that the result is true for simple
crossed products if his "strong Connes spectrum" is used instead of Olesen and
Pedersen' Connes spectrum. Rieffel \cite{rieffel} and Landstad \cite{landstad}
have put the problem of finding a "good" definition of the Connes spectrum for
compact, not necessarily abelian group actions on C*-algebras. In
\cite{landstad}, Landstad remarks that a "good" definition of the Connes
spectrum should lead to a result that generalizes the Olesen-Pedersen
characterization of prime crossed products to the case of nonabelian compact
group actions. Gootman, Lazar, Peligrad \cite{glp} have defined the Connes
spectrum and the strong Connes spectrum for compact, not necessarily abelian
group actions on C*-algebras. In the case of abelian groups,these notions
coincide with the previous ones. Moreover, in \cite{glp}, Gootman, Lazar, and
Peligrad prove the characterizations of the primeness and simplicity of
crossed products using their definitions. In this paper we present definitions
of both the Connes spectrum (Definition \ref{arvconnesspectra}) and the strong
Connes spectrum (Definition \ref{strongspectra}) in the case of compact
quantum groups and prove the corresponding characterizations of primeness and
simplicity of crossed products (Theorems \ref{primeness} and \ref{simple}). In
addition, we prove that the Connes spectra are closed under tensor products
(Propositions \ref{strongconnesclosed} and \ref{connesclosed} ). This result
is new for nonabelian compact groups as well. We use the techniques developed
by Woronowicz \cite{wor1,wor2}, Boca \cite{boca}, and the authors in
\cite{ral, ralpel}. In addition, this paper contains new methods for the study
of the hereditary C*-subalgebras that are invariant under a compact quantum
group coaction (Section 3) and for the proof of the key Lemma
\ref{quantum2.10}.

\section{Preliminaries}

Let $G=(A,\Delta)$ be a compact quantum group (see \cite{wor2}) and let
$(B,G,\delta)$ be a quantum dynamical system, where $B$ is a C*-algebra and
$\delta$ is a coaction of $A$ on $B$ (see \cite{boca} or \cite{podles}).
Denote by $\widehat{G}$ the set of all equivalence classes of irreducible
representations of $G$ (\cite{wor1}, section 4).

For each $\alpha\in\widehat{G}$, $u^{\alpha}=\underset{i,j=1}{\overset
{d_{\alpha}}{\sum}}m_{ij}^{\alpha}\otimes u^{\alpha}_{ij}$ denotes a
representative of each class. Then the linear space generated by $\{u^{\alpha
}_{ij}|\alpha\in\widehat{G},1\leq i,j \leq d_{\alpha}\}$ is a $\ast$-algebra
$\mathcal{A}$, called the Woronowicz-Hopf algebra (\cite{wor2}, Section 5).
For $\alpha\in\widehat{G}$ and $u^{\alpha}\in\alpha$ a unitary representative,
denote $\overline{u^{\alpha}}=\underset{i,j=1}{\overset{d_{\alpha}}{\sum}%
}m_{ij}^{\alpha}\otimes{u^{\alpha}_{ij}}^{\ast}$. Then $\overline{u^{\alpha}}$
is a (not necessarily unitary) representation of A called the adjoint of
$u^{\alpha}$. We will denote by $\overline{\alpha}$ the equivalence class of
$\overline{u^{\alpha}}$.

Set $\chi_{\alpha}=\underset{i=1}{\overset{d_{\alpha}}{\sum}}u^{\alpha}_{ii}$.
We use $F_{\alpha}$ to denote the unique positive, invertible operator in
$B(H_{\alpha})$, that intertwines $u^{\alpha}$ with its double contragradient
representation $(u^{\alpha})^{cc}$ such that $tr(F_{\alpha})=tr(F_{\alpha
}^{-1})$. Set $M_{\alpha}=tr(F_{\alpha})$ (\cite{wor1}, Theorem 5.4).

Since every positive matrix is unitarily equivalent to a diagonal matrix, we
may assume that the matrices $F_{\alpha}$ are diagonal: $F_{\alpha
}=diag\{f_{1}^{\alpha},\ldots,f_{d_{\alpha}}^{\alpha}\}$. The formula
$f_{1}^{\alpha}(u^{\alpha}_{nm})=\delta_{nm}f_{m}$ defines a linear form on
$\mathcal{A}$. The above assumption implies that all $u^{\alpha}_{ij}$ are
mutually orthogonal in $H_{h}$ and therefore%

\begin{equation}
\label{diag}h({u^{\alpha}_{ij}}^{\ast}u^{\alpha}_{mn})=\frac{1}{M_{\alpha}%
}\frac{1}{f_{i}^{\alpha}}\delta_{im}\delta_{jn} \text{\; and}%
\end{equation}

\begin{equation}
\label{diag2}h({u^{\alpha}_{mk}}{u^{\alpha}_{nl}}^{\ast})=\frac{f_{l}^{\alpha
}}{M_{\alpha}}\delta_{mn}\delta_{lk}%
\end{equation}

where $h$ is the Haar state on $G$ and $\delta_{rs}$ are the Kronecker
$\delta$'s (\cite{wor1}, Theorem 5.7).

For each $\alpha\in\widehat{G}$, denote by $B_{\alpha}^{\delta}$ the
associated spectral subspace defined by (see \cite{ralpel} or \cite{boca}):
\[
B_{\alpha}^{\delta}=\{P_{\alpha}(x)|\; x\in B\},
\]
where $P_{\alpha}=(\iota\otimes h_{\alpha})(\delta(x))$ and $h_{\alpha
}=M_{\alpha}h\cdot(\chi_{\alpha}\ast f_{1}^{\alpha})^{\ast}$. Recall that for
all a, b $\in$ $A$, $h\cdot a (b)=h(ba)$ and for all linear functionals $\xi$
on $A$, $a\ast\xi=(\xi\otimes\iota)(\Delta(a))$ (see \cite{wor2} relation
1.14, or \cite{ral}). In particular, for $\alpha=\iota$, $P_{\iota}$ is the
projection of $B_{\ast}$ onto the fixed point algebra $B^{\delta}$.

Let $c_{ij}=M_{\alpha}(u^{\alpha}_{ij}\ast f_{1}^{\alpha})^{\ast}$. Note that,
since $F_{\alpha}$ is a diagonal matrix we obtain $u^{\alpha}_{ij}\ast
f_{1}=f_{i}^{\alpha}u^{\alpha}_{ij}$ and hence $c_{ij}=M_{\alpha}f_{i}%
^{\alpha}{u^{\alpha}_{ij}}^{\ast}$.

Define the mapping $P^{\alpha}_{ij}:B \to B$ by
\[
P^{\alpha}_{ij}(x)=(id\otimes h \cdot c^{\alpha}_{ji})(\delta(x)),
\]
for all $x \in B$. Note that $P^{\alpha}_{ij}P^{\alpha}_{kl}=\delta
_{il}P^{\alpha}_{kj}$.

For each $\alpha\in\widehat{G}$, define%

\[
B_{2}^{\delta}(u^{\alpha} )=\{[P^{\alpha}_{ij}(x)]_{ij} |\; x\in B\}\subseteq
B\otimes\mathcal{M}_{d_{\alpha}},
\]
where $[P^{\alpha}_{ij}(x)]_{ij}=\underset{i,j=1}{\overset{d_{\alpha}}{\sum}}
P^{\alpha}_{ij}(x)\otimes m_{ij}^{\alpha}$ with $\{ m_{ij}^{\alpha}|1\leq
i,j\leq d_{\alpha}\} $ the set of elementary matrices in the algebra
$\mathcal{M}_{d_{\alpha}}$ of scalar matrices of order $d_{\alpha}\times
d_{\alpha}$.

Notice that $B_{2}^{\delta}(u^{\alpha})$ depends on the representative
$u^{\alpha}$, not only on the equivalence class $\alpha\in\widehat{G}$.
However, for two equivalent representations $u_{1}^{\alpha}$ and
$u_{2}^{\alpha}$, the corresponding $B_{2}^{\delta}$ are spatially isomorphic.

The proofs of the following remarks are straightforward from definitions.

\begin{remark}\label{rem2.1}
\begin{itemize}
\item[(1)] If $u^{\alpha}$ is a unitary representation of $G$, then
$$\delta(P_{ij}^{\alpha}(x))=\underset{k=1}{\overset{d_{\alpha}}{\sum}}P_{ik}^{\alpha}(x)\otimes u_{kj}^{\alpha}.$$
\item[(2)] $B_{\alpha}^{\delta}=linspan\{P_{ij}^{\alpha}(x)|\; x\in B,\; i,j=1,2,\ldots , d_{\alpha}\}.$
\item[(3)] For $x\in B$, let $X=[P_{ij}^{\alpha}(x)]_{ij}=\underset{i,j=1}{\overset{d_{\alpha}}\sum}P_{ij}^{\alpha}(x)\otimes m_{ij}^{\alpha}$. Then $X\in B_{2}^{\delta}(u^{\alpha})$ and $\delta_{13}(X)=(X\otimes 1_{A})(1_{B}\otimes u^{\alpha})$, where $1_{A}$ is the unit of $A$ and $1_{B}$ is the unit of the multiplier algebra of $B$. The leg numbering notation used here is the standard one (\cite{baaj} and \cite{wor1}). Also, $B_{2}^{\delta}(u^{\alpha})$ is isomorphic as a Banach space to $B_{\alpha}^{\delta}$ through the mapping $X\to \underset{i=1}{\overset{d_{\alpha}}{\sum}}P_{ii}^{\alpha}(x)$. Therefore:
$$B_{2}^{\delta}(u^{\alpha})=\{X \in B\otimes \mathcal{M}_{d_{\alpha}}| \; \delta_{13}(X)=(X\otimes 1_{A})(1_{B}\otimes u^{\alpha})\}$$
\item[(4)]Let $x\in B$ and fix $x_{i_{0}j_{0}}=P_{i_{0}j_{0}}^{\alpha}(x)$. Then $x_{i_{0}j_{0}}\in B$ and $[P_{ij}^{\alpha}(x_{i_{0}j_{0}})]_{ij}\in B_{2}^{\delta}(u^{\alpha})$ is a matrix whose only non-zero row is the $j_{0}$-row and whose $j_{0}j$-entry is given by $P_{i_{0}j}^{\alpha}(x)$, for each $j=1,2,\ldots , d_{\alpha}$. Furthermore,
$$B_{2}^{\delta}(u^{\alpha})=linspan \{[P_{ij}^{\alpha}(x_{rs})]_{ij}|\; r,s=1,2,...,d_{\alpha}\}.$$
\end{itemize}
\end{remark}

With $\alpha\in\widehat{G}$ and $v$ the right regular representation of $G$,
we will use the following notations (see \cite{wor2} relation 3.2 and
\cite{ral}):
\[
p_{\alpha}=(id\otimes h_{\alpha})(v^{\ast}),
\]
\[
\mathcal{F}_{v}(a)=(id\otimes ha)(v^{\ast}).
\]

Denote by $\widehat{A}$ the norm closure of the set of all operators of the
form $\mathcal{F}_{v}(a)$, where $a\in A$.

Recall that the crossed product $B\times_{\delta}G$ is defined to be the
C*-algebra generated by all elements of the form $(\pi_{u}\times\pi
_{h})(\delta(b))(1\otimes\mathcal{F}_{v}(a))$, where $a\in A$, $b\in B$,
$\pi_{u}:B\to B(H_{u})$ is the universal representation of the C*-algebra $B$
and $\pi_{h}:A\to B(H_{h})$ is the GNS representation of $A$ associated to the
Haar state $h$.

Furthermore, if $\alpha_{1}, \alpha_{2}\in\widehat{G}$, define
\[
S_{\alpha_{1},\alpha_{2}}=(1\otimes p_{\alpha_{1}})(B\times_{\delta
}G)(1\otimes p_{\alpha_{2}}),
\]
\[
S_{\alpha}=S_{\alpha,\alpha}.
\]

For properties of $S_{\alpha_{1},\alpha_{2}}$ see \cite{ral}, Lemma 3.1 and
Proposition 3.2. It is straightforward to check that
\[
S_{\alpha,\iota}=linspan\{ (1 \otimes p_{\alpha})\delta(b^{\ast})(1\otimes
p_{\iota})|\; b\in B_{\alpha}\}.
\]

Note that, from the above definition, an element $(1\otimes p_{\alpha}%
)\delta(b)^{\ast}(1\otimes p_{\iota})$ of $S_{\alpha,\iota}$ is an operator
from $H_{u}\otimes\mathbb{C}\xi_{h}$ to $H_{u}\otimes p_{\alpha}H_{h}$ . Since
$p_{\alpha}H_{h}$ is a $d_{\alpha}^{2}$ dimensional subspace of $H_{h}$ (with
basis $\left\{  u_{ij}^{\alpha\ast}|1\leq i,j\leq d_{\alpha}\right\}  $),
every such operator can be represented as a $d_{\alpha}^{2}\times1$ column
matrix with entries in $B$.

If $v$ is the right regular representation, then $ad(v)$ is a coaction of $G$
on $B\times_{\delta}G$, defined by $ad(v)(x)=v_{23}(x\otimes1)v_{23}^{\ast}$,
for all $x\in B\times_{\delta}G$ (\cite{ral}, Lemma 3.3). We consider the
projection $Q$ of $B\times_{\delta}G$ on $(B\times_{\delta}G)^{ad(v)}$, the
$C^{\ast}$-subalgebra of fixed points for the coaction $ad(v)$:%

\[
Q(z)=(id_{B\times_{\delta}G}\otimes h)(ad(v)(z))=(id_{B\times_{\delta}%
G}\otimes h)(v_{23}(z\otimes1)v_{23}^{\ast}).
\]

In \cite{ral}, Section 2.3 it is noticed that, if $u^{\alpha}$ is a
representation of $G$ on a Hilbert space $H$, the following is a coaction of
$G$ on $B\otimes K(H)$:
\[
\delta_{u^{\alpha}}(b\otimes k)=u^{\alpha}_{23}\delta(b)_{13}(1\otimes k
\otimes1){u_{23}^{\alpha}}^{\ast}.
\]

With $\mathcal{I}_{\alpha}=(B\times_{\delta}G)^{ad(v)}\cap S_{\alpha}$, and
$I_{d_{\alpha}}$ the $d_{\alpha}$ dimensional identity matrix, define the map
$\Psi:(B\otimes\mathcal{M}_{d_{\alpha}})^{\delta_{u^{\alpha}}}\to
\mathcal{I}_{\alpha}$,
\[
\Psi(\Lambda)=[\lambda_{ij}\otimes I_{d_{\alpha}}]_{ij},
\]

for each $\Lambda=[\lambda_{ij}]_{ij}\in(B\otimes\mathcal{M}_{d_{\alpha}%
})^{\delta_{u^{\alpha}}}$ (see \cite{ral}, Section 4).

The following result is a generalization of \cite{p}, Lemma 2.10 to the case
of compact quantum groups. The proof uses the matricial representation of the
elements of $S_{\alpha,\iota}$ discussed above. We will use this result in
Section \ref{spectra}.

\begin{lemma}\label{quantum2.10}
$Q(\overline{S_{\alpha , \iota}S_{\iota , \alpha}})=\Psi (\overline{B_{2}^{\delta}(u^{\alpha})^{\ast}B_{2}^{\delta}(u^{\alpha})})$
\end{lemma}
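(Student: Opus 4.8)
The plan is to reduce the identity to a computation in matrix form and then transport it through the Banach-space isomorphism $\Psi$. First I would observe that, because $p_{\iota}$ and $p_{\alpha}$ are projections and $S_{\iota,\alpha}=S_{\alpha,\iota}^{\ast}$, one has $S_{\alpha,\iota}S_{\iota,\alpha}=S_{\alpha,\iota}S_{\alpha,\iota}^{\ast}\subseteq(1\otimes p_{\alpha})(B\times_{\delta}G)(1\otimes p_{\alpha})=S_{\alpha}$, so that $\overline{S_{\alpha,\iota}S_{\iota,\alpha}}\subseteq S_{\alpha}$ and hence $Q(\overline{S_{\alpha,\iota}S_{\iota,\alpha}})\subseteq(B\times_{\delta}G)^{ad(v)}\cap S_{\alpha}=\mathcal{I}_{\alpha}$. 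Since $\Psi$ is a bijection onto $\mathcal{I}_{\alpha}$ and its range contains $\Psi(\overline{B_{2}^{\delta}(u^{\alpha})^{\ast}B_{2}^{\delta}(u^{\alpha})})$, it is enough to evaluate $\Psi^{-1}\circ Q$ on the generators of $\overline{S_{\alpha,\iota}S_{\iota,\alpha}}$ and identify the result with $\overline{B_{2}^{\delta}(u^{\alpha})^{\ast}B_{2}^{\delta}(u^{\alpha})}$.

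Next I would make the column-matrix picture explicit. Writing a generator $(1\otimes p_{\alpha})\delta(b^{\ast})(1\otimes p_{\iota})$ of $S_{\alpha,\iota}$ as a $d_{\alpha}^{2}\times1$ column with entries in $B$ relative to the basis $\{u_{ij}^{\alpha\ast}\}$ of $p_{\alpha}H_{h}$, and using the spectral formula $\delta(P_{ij}^{\alpha}(x))=\sum_{k}P_{ik}^{\alpha}(x)\otimes u_{kj}^{\alpha}$ of Remark \ref{rem2.1}(1), I would express those entries through the maps $P_{ij}^{\alpha}$. Forming the product of such a column with the row obtained by adjoining a second generator then yields a $d_{\alpha}^{2}\times d_{\alpha}^{2}$ operator in $S_{\alpha}$ whose entries are products of a $P_{ij}^{\alpha}$-entry with the adjoint of a $P_{kl}^{\alpha}$-entry in $B$. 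Applying $Q$, which is the conditional expectation $z\mapsto(id\otimes h)(v_{23}(z\otimes1)v_{23}^{\ast})$ onto $(B\times_{\delta}G)^{ad(v)}$, and evaluating the resulting Haar integrals by means of the orthogonality relations \eqref{diag} and \eqref{diag2}, I would show that the surviving entries collapse to the sums $\sum_{i}P_{ij}^{\alpha}(x)^{\ast}P_{il}^{\alpha}(y)$, which are exactly the $(j,l)$-entries of $B_{2}^{\delta}(u^{\alpha})^{\ast}B_{2}^{\delta}(u^{\alpha})$, each appearing with the multiplicity $I_{d_{\alpha}}$ dictated by the definition of $\Psi$.

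The step I expect to be the main obstacle is this $Q$-computation: one must check that after the Haar averaging the unwanted cross terms vanish and that precisely the factor $I_{d_{\alpha}}$ is produced, so that the output genuinely lies in the range of $\Psi$. The cleanest way to control it is to verify first that products $X^{\ast}Y$ of elements $X,Y\in B_{2}^{\delta}(u^{\alpha})$ satisfy $\delta_{u^{\alpha}}(X^{\ast}Y)=X^{\ast}Y\otimes1$; indeed, by the characterization $\delta_{13}(X)=(X\otimes1_{A})(1_{B}\otimes u^{\alpha})$ of Remark \ref{rem2.1}(3) together with the unitarity of $u^{\alpha}$, one computes $\delta_{13}(X^{\ast}Y)=(1_{B}\otimes u^{\alpha})^{\ast}(X^{\ast}Y\otimes1_{A})(1_{B}\otimes u^{\alpha})$, and conjugating by $u_{23}^{\alpha}$ gives $X^{\ast}Y\otimes1$. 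This both explains why $\overline{B_{2}^{\delta}(u^{\alpha})^{\ast}B_{2}^{\delta}(u^{\alpha})}$ lies in the domain of $\Psi$ and pins down the $Q$-image. Continuity of $Q$ (a norm-one projection) and of $\Psi$ (a Banach-space isomorphism) ensures that the closed linear spans on the two sides correspond. Finally, the reverse inclusion is obtained by running the same dictionary backwards, realizing each generator $[P_{ij}^{\alpha}(x_{rs})]_{ij}$ of $B_{2}^{\delta}(u^{\alpha})$ from Remark \ref{rem2.1}(4) as the column data of a suitable element of $S_{\alpha,\iota}$.
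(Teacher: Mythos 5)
Your proposal is correct and follows essentially the same route as the paper: representing the generators of $S_{\alpha,\iota}$ and $S_{\iota,\alpha}$ as $d_{\alpha}^{2}\times 1$ columns and $1\times d_{\alpha}^{2}$ rows over $B$, computing $Q$ of their products via the orthogonality relations \eqref{diag} and \eqref{diag2}, and matching the outcome (entries tensored with $I_{d_{\alpha}}$, cross terms with $j_{0}\neq s_{0}$ vanishing) with $\Psi$ applied to products of the single-row matrices of Remark \ref{rem2.1}(4), then passing to closed spans by continuity of $Q$ and $\Psi$. Your extra verification that $\delta_{u^{\alpha}}(X^{\ast}Y)=X^{\ast}Y\otimes 1$, so that $X^{\ast}Y$ lies in the domain of $\Psi$, is a point the paper leaves implicit but is not a departure from its argument.
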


\begin{proof}
Let $b\in B$ and $b_{ij}=P_{ij}^{\alpha}(b)$. Then, if $\eta\in H_{u}$ and
$\xi_{h}$ is the cyclic vector in $H_{h}$, for $i_{0},j_{0}=1,2,...,d_{\alpha
}$ we have:
\begin{align}
(1\otimes p_{\alpha})\delta(b_{i_{0}j_{0}}^{\ast})(1\otimes p_{\iota}%
)(\eta\otimes\xi_{h})  &  = (1\otimes p_{\alpha})(\underset{l=1}%
{\overset{d_{\alpha}}{\sum}}b_{i_{0}l}^{\ast}\otimes{u^{\alpha}_{lj_{0}}%
}^{\ast})(1\otimes p_{\iota})(\eta\otimes\xi_{h})\nonumber\\
&  =\underset{l=1}{\overset{d_{\alpha}}{\sum}}(b_{i_{0}l}^{\ast}%
\otimes(p_{\alpha}{u^{\alpha}_{lj_{0}}}^{\ast}p_{\iota}))(\eta\otimes\xi
_{h})\nonumber\\
&  =\underset{l=1}{\overset{d_{\alpha}}{\sum}}(b_{i_{0}l}^{\ast}%
\otimes\mathcal{F}_{v}(a_{\alpha})^{\ast}{u^{\alpha}_{lj_{0}}}^{\ast
}\mathcal{F}_{v}(1)^{\ast})(\eta\otimes\xi_{h})\nonumber\\
&  =\underset{l=1}{\overset{d_{\alpha}}{\sum}}(b_{i_{0}l}^{\ast}%
\otimes(a_{\alpha}^{\ast}h\ast{u^{\alpha}_{lj_{0}}}^{\ast}))(\eta\otimes
\xi_{h})\nonumber\\
&  =\underset{l,r,m,n}{\sum}(b_{i_{0}l}^{\ast}\otimes M_{\alpha}%
f_{1}(u^{\alpha}_{nm}){u^{\alpha}_{lr}}^{\ast}h({u^{\alpha}_{rj_{0}}}^{\ast
}u^{\alpha}_{mn})(\eta\otimes\xi_{h})\nonumber\\
&  =\underset{l,r,m,n}{\sum} M_{\alpha}f_{1}(u^{\alpha}_{nm})h({u^{\alpha
}_{rj_{0}}}^{\ast}u^{\alpha}_{mn})(b_{i_{0}l}^{\ast}\otimes{u^{\alpha}_{lr}%
}^{\ast})(\eta\otimes\xi_{h})\nonumber\\
&  =\underset{l,r,m,n}{\sum}\delta_{j_{0}n}f_{1}(u^{\alpha}_{nm}%
)f_{-1}(u^{\alpha}_{mr})(b_{i_{0}l}^{\ast}\otimes{u^{\alpha}_{lr}}^{\ast
})(\eta\otimes\xi_{h})\nonumber\\
&  =\underset{l=1}{\overset{d_{\alpha}}{\sum}}b_{i_{0}l}^{\ast}\eta
\otimes{u^{\alpha}_{lj_{0}}}^{\ast}\xi_{h}\nonumber
\end{align}

Consequently, the matrix of $(1\otimes p_{\alpha})\delta(b_{i_{0}j_{0}}^{\ast
})(1\otimes p_{\iota})$ viewed as an operator from $H_{u}\otimes\mathbb{C}%
\xi_{h}$ to $H_{u}\otimes p_{\alpha}H_{h}$ is the $d_{\alpha}^{2}\times1$
column matrix whose entry $[(k-1)d_{\alpha}+j_{0}]\times1$ is $b_{i_{0}%
k}^{\ast}$ and all the other entries are 0. Let now $c\in B$ and
$c_{r_{0}s_{0}}=P_{r_{0}s_{0}}^{\alpha}(c)$. Then, similarly, $(1\otimes
p_{\alpha})\delta(c_{r_{0}s_{0}})(1\otimes p_{\iota})$ can be represented by a
$1\times d_{\alpha}^{2}$ row matrix whose entry $1\times[(k-1)d_{\alpha}%
+s_{0}] $ is $c_{r_{0}k}$ and all the other entries are 0.

Therefore, the product $(1\otimes p_{\alpha})\delta(b_{i_{0}j_{0}}^{\ast
})(1\otimes p_{\iota})\delta(c_{r_{0}s_{0}})(1\otimes p_{\alpha})$ is
represented by a $d_{\alpha}^{2}\times d_{\alpha}^{2}$ matrix $X$, partitioned
in $d_{\alpha}^{2}$ blocks $X_{ij}$, where each block $X_{ij}$ has the entry
$j_{0}s_{0}$ equal to $b_{i_{0}i}^{\ast}c_{r_{0}j}$ and the rest equal to 0,
i.e. $X_{ij}=b_{i_{0}i}^{\ast}c_{r_{0}j}\otimes m_{j_{0}s_{0}}.$

Hence $X=\underset{i,j}{\sum}b_{i_{0}i}^{\ast}c_{r_{0}j}\otimes m_{ij}\otimes
m_{j_{0}s_{0}}.$

On the other hand, by (\cite{ralunitary}, proof of Proposition 9),
$v(p_{\alpha}\otimes1)=\sum I_{d_{\alpha}}\otimes u^{\alpha}=\underset
{p,q}{\sum}I_{d_{\alpha}}\otimes m_{pq}\otimes u^{\alpha}_{pq}$. Hence:
\begin{align}
v_{23}(X\otimes1)v_{23}^{\ast}  &  =\underset{i,j,p,q,k,l}{\sum}b_{i_{0}%
i}^{\ast}c_{r_{0}j}\otimes m_{ij}\otimes m_{pq}m_{j_{0}s_{0}}m_{kl}\otimes
u^{\alpha}_{pq}{u^{\alpha}_{kl}}^{\ast}\nonumber\\
&  =\underset{i,j,p,q,k,l}{\sum}b_{i_{0}i}^{\ast}c_{r_{0}j}\otimes
m_{ij}\otimes\delta_{qj_{0}}\delta_{s_{0}l}m_{pk}\otimes u^{\alpha}%
_{pq}{u^{\alpha}_{kl}}^{\ast}\nonumber\\
&  =\underset{i,j,p,k}{\sum}b_{i_{0}i}^{\ast}c_{r_{0}j}\otimes m_{ij}\otimes
m_{pk}\otimes u^{\alpha}_{pj_{0}}{u^{\alpha}_{ks_{0}}}^{\ast}\nonumber
\end{align}
Applying $id\otimes h$ to the above expression and using Formula \ref{diag2}
above, we get:%

\begin{align}
Q(X)  &  =(\underset{i,j,p,k}{\sum}b_{i_{0}i}^{\ast}c_{r_{0}j}\otimes
m_{ij}\otimes m_{pk})h(u^{\alpha}_{pj_{0}}{u^{\alpha}_{ks_{0}}}^{\ast
})\nonumber\\
&  =\frac{1}{M_{\alpha}}f_{1}(u_{j_{0}s_{0}})\underset{i,j,p,k}{\sum}%
\delta_{pk}b_{i_{0}i}^{\ast}c_{r_{0}j}\otimes m_{ij}\otimes m_{pk}\nonumber\\
&  =\frac{1}{M_{\alpha}}f_{j_{0}}^{\alpha}\delta_{j_{0}s_{0}}\underset
{i,j,p}{\sum}b_{i_{0}i}^{\ast}c_{r_{0}j}\otimes m_{ij}\otimes m_{pp}%
\nonumber\\
&  =\frac{1}{M_{\alpha}}f_{j_{0}}^{\alpha}\delta_{j_{0}s_{0}}\underset
{i,j}{\sum}b_{i_{0}i}^{\ast}c_{r_{0}j}\otimes m_{ij}\otimes I_{d_{\alpha}%
}\nonumber
\end{align}
Hence, if $j_{0}=s_{0}$, we have:
\[
Q(X)=c\underset{i,j}{\sum}b_{i_{0}i}^{\ast}c_{r_{0}j}\otimes m_{ij}\otimes
I_{d_{\alpha}},
\]
where $c=\frac{f_{j_{0}}^{\alpha}}{M_{\alpha}}>0$.

But this is exactly $\Psi(M^{\ast}N)$ where $M\in B_{2}^{\delta}(u^{\alpha})$
is the matrix whose $j_{0}$ row is $[cb_{i_{0}i}]$ and the other entries are 0
and $N\in B_{2}^{\delta}(u^{\alpha})$ is the matrix whose $s_{0}=j_{0}$ row is
$[c_{r_{0}j}]$ and the other entries 0. If $j_{0}\ne s_{0}$ then $Q(X)=0$ but,
as can be easily checked, also $M^{\ast}N=0$ and $\Psi(M^{\ast}N)=0$.
\end{proof}

Let now $(B, G, \delta)$ be a quantum dynamical system. We say that a
$C^{\ast}$-subalgebra $C\subset B$ is $\delta$-invariant if the following two
conditions hold:

\begin{itemize}
\item[(1)] $\delta(C)\subseteq C\otimes A$\notag

\item[(2)] $\overline{\delta(C)(1\otimes A)}=C\otimes A$\notag

\end{itemize}

In other words, $C$ is called $\delta$-invariant if the restriction of
$\delta$ to $C$ is a coaction. The set of all hereditary, $\delta$-invariant
$C^{\ast}$-subalgebras of $B$ will be denoted by $\mathcal{H}^{\delta}(B)$.

A C*-algebra $B$ is called $G$-prime if the product of two non-zero $\delta
$-invariant ideals is non-zero.

A C*-algebra $B$ is called $G$-simple if $B$ has no non-trivial $\delta
$-invariant two sided ideals.

We will need the following remarks. Their proofs are straightforward.

\begin{remark}\label{rem1}
\begin{itemize}
\item[(1)]$S_{\iota}=B^{\delta}\otimes 1$
\item[(2)]Using the proof of Proposition 3.2 in \cite{ral}, one can show that for $a_{0},a_{1}\in B^{\delta}$ and $\alpha \in \widehat{G}$, then $a_{1}B_{\alpha}^{\delta}a_{0}=(0)$ if and only if $(a_{0}\otimes 1)S_{\alpha,\iota}(a_{1}\otimes 1)=(0)$.
\item[(3)] If $C\in \mathcal{H}^{\delta}(B)$, then $C\times_{\delta}G$ is a hereditary subalgebra of $B\times_{\delta}G$.
\item[(4)] If $J\subset B^{\delta}$ is a two sided ideal, then $D=\overline{JBJ}\in \mathcal{H}^{\delta}(B)$.
\end{itemize}
\end{remark}

The next lemma and its corollary will be used in Section \ref{strong}.

\begin{lemma}\label{cc}
Let $\alpha \in \widehat{G}$. Then $S_{\alpha}=\overline{linspan\{S_{\alpha,\beta}S_{\beta,\alpha}|\beta \in \widehat{G}\}}^{\|\cdot \|}$.
\end{lemma}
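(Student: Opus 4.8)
The plan is to prove the two inclusions separately, with the engine being that $\{p_\beta\}_{\beta\in\widehat G}$ is a family of mutually orthogonal projections whose finite partial sums $e_F=\sum_{\beta\in F}(1\otimes p_\beta)$ (over finite $F\subseteq\widehat G$) form a right approximate identity for $D:=B\times_\delta G$. I would first record the standing facts: $S_\alpha=(1\otimes p_\alpha)D(1\otimes p_\alpha)$ is a corner of $D$ by the projection $1\otimes p_\alpha\in M(D)$, hence a hereditary $C^\ast$-subalgebra and in particular norm closed; and each $p_\beta=\mathcal F_v(a_\beta)$ with $a_\beta=M_\beta(\chi_\beta\ast f_1^\beta)^{\ast}$ lies in $\widehat A$, with $1\otimes p_\beta\in M(D)$ (as in \cite{ral}, Lemma 3.1). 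The inclusion $\overline{linspan\{S_{\alpha,\beta}S_{\beta,\alpha}\mid\beta\in\widehat G\}}\subseteq S_\alpha$ is then the formal one: since $p_\beta^2=p_\beta$, we have $S_{\alpha,\beta}S_{\beta,\alpha}=(1\otimes p_\alpha)D(1\otimes p_\beta)D(1\otimes p_\alpha)\subseteq(1\otimes p_\alpha)D(1\otimes p_\alpha)=S_\alpha$, using $D(1\otimes p_\beta)D\subseteq D$; taking closed linear spans and invoking the closedness of $S_\alpha$ finishes this direction.

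The substance is the reverse inclusion, and the tool is the approximate identity $(e_F)$. The claim I would isolate is that $d\,e_F\to d$ in norm for every $d\in D$. Since $D$ is the closed linear span of its generators $(\pi_u\times\pi_h)(\delta(b))(1\otimes\mathcal F_v(a))$, and each such generator carries a right factor $1\otimes\mathcal F_v(a)\in 1\otimes\widehat A$, it suffices to know $\mathcal F_v(a)\,e_F\to\mathcal F_v(a)$ in $\widehat A$; this is exactly the statement that the increasing projections $\sum_{\beta\in F}p_\beta$ are an approximate identity of $\widehat A$. Because $\|e_F\|\le 1$, the set $\{d\in D:\|d\,e_F-d\|\to 0\}$ is a norm-closed subspace containing all generators, hence equals $D$.

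Granting this, I would run the main argument as follows. Every $x\in S_\alpha$ has the form $x=(1\otimes p_\alpha)y(1\otimes p_\alpha)$ with $y\in D=\overline{linspan}(D\cdot D)$, so $x$ is a norm limit of finite sums of elements $(1\otimes p_\alpha)d_1d_2(1\otimes p_\alpha)$. For each such term, inserting $e_F$ in the middle and splitting each projection via $p_\beta=p_\beta^2$ gives
\[(1\otimes p_\alpha)d_1\,e_F\,d_2(1\otimes p_\alpha)=\sum_{\beta\in F}\bigl[(1\otimes p_\alpha)d_1(1\otimes p_\beta)\bigr]\bigl[(1\otimes p_\beta)d_2(1\otimes p_\alpha)\bigr],\]
where each bracketed pair lies in $S_{\alpha,\beta}S_{\beta,\alpha}$, so the right-hand side is in $linspan\{S_{\alpha,\beta}S_{\beta,\alpha}\}$. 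Since $d_1\,e_F\to d_1$, the left-hand side converges to $(1\otimes p_\alpha)d_1d_2(1\otimes p_\alpha)$, placing it in $\overline{linspan\{S_{\alpha,\beta}S_{\beta,\alpha}\}}$; passing to limits over the approximating sums yields $S_\alpha\subseteq\overline{linspan\{S_{\alpha,\beta}S_{\beta,\alpha}\}}$, which combined with the first inclusion gives equality.

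The main obstacle is the approximate-identity claim, i.e.\ that $\sum_{\beta\in F}p_\beta$ is an approximate identity of $\widehat A$. I would settle it by identifying the $p_\beta$ as the minimal central projections decomposing $\widehat A$ into its finite-dimensional blocks (consistent with $p_\beta H_h$ being the $d_\beta^2$-dimensional $\beta$-isotypic subspace noted in the Preliminaries), so that $\widehat A$ is a $c_0$-direct sum of these blocks and the partial sums of block units converge strictly to $1$. Everything else in the proof is formal; this structural identification of the $p_\beta$ inside $\widehat A$ is the only non-routine ingredient.
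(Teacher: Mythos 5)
Your proof is correct and takes essentially the same approach as the paper: the paper's one-line argument inserts $\sum_{\beta\in\widehat{G}}(1\otimes p_{\beta})=1$ (convergence in the strict topology of $\widehat{A}$) between two copies of $B\times_{\delta}G$, which is exactly the approximate-identity insertion you carry out. Your additional details (the corner structure of $S_{\alpha}$, the $c_0$-direct-sum identification of the $p_{\beta}$ as block units, and the splitting $D=\overline{linspan}(D\cdot D)$) just make explicit what the paper leaves implicit.
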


\begin{proof}
Since $\underset{\beta\in\widehat{G}}{\sum}p_{\beta}=1$ in the strict topology
of $\widehat{A}$ we have
\[
(1\otimes p_{\alpha})(B\times_{\delta}G)(1\otimes p_{\alpha})=(1\otimes
p_{\alpha})(B\times_{\delta}G)\underset{\beta\in\widehat{G}}{\sum}(1\otimes
p_{\beta})(B\times_{\delta}G)(1\otimes p_{\alpha})
\]
and the claim follows.
\end{proof}

\begin{corollary}\label{cor}
Let $J\subset B^{\delta}$ be a two sided ideal. Then $C=\overline{BJB}\in \mathcal{H}^{\delta}(B)$ and $$C^{\delta}\otimes 1 = \overline{linspan\{S_{\iota,\beta }(J\otimes 1) S_{\beta, \iota}| \beta \in \widehat{G}\}}^{\| \cdot \|}$$
\end{corollary}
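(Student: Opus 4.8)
The plan is to prove the two assertions separately, treating the displayed identity as the substantive part.

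For the first assertion, observe that $C=\overline{BJB}$ is the closed two-sided ideal of $B$ generated by $J$, hence automatically hereditary. For $\delta$-invariance I would argue exactly as in Remark \ref{rem1}(4): since $J\subseteq B^{\delta}$ we have $\delta(j)=j\otimes 1$ for $j\in J$, so $\delta(bjb')=\delta(b)(j\otimes 1)\delta(b')$ lies in the closed ideal $C\otimes A$ of $B\otimes A$ (which is invariant under the multipliers $\delta(b),\delta(b')\in M(B\otimes A)$), giving $\delta(C)\subseteq C\otimes A$. The density condition $\overline{\delta(C)(1\otimes A)}=C\otimes A$ then follows from the corresponding property of $\delta$ on $B$ together with Remark \ref{rem2.1}(1), which lets one recover each spectral component inside $\overline{\delta(C)(1\otimes A)}$.

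For the identity I would first simplify the generators of the right-hand side. Since $\pi_{u}(j)\otimes 1$ commutes with $1\otimes p_{\beta}$ and $\delta(b)(\pi_{u}(j)\otimes 1)$ equals $\delta(bj)$ in $B\times_{\delta}G$, every element of $S_{\iota,\beta}(J\otimes 1)S_{\beta,\iota}$ has the form
\[
C(b,c,j):=(1\otimes p_{\iota})\delta(bj)(1\otimes p_{\beta})\delta(c^{*})(1\otimes p_{\iota}),\qquad b,c\in B_{\beta}^{\delta},\ j\in J.
\]
The crux is to evaluate this compression. Reducing $b$ and $c$ to matrix entries $P_{i_{0}j_{0}}^{\beta}(b_{0})$, $P_{r_{0}s_{0}}^{\beta}(c_{0})$ and running the computation of Lemma \ref{quantum2.10} — using $\delta(P_{ij}^{\beta}(x))=\sum_{k}P_{ik}^{\beta}(x)\otimes u_{kj}^{\beta}$ from Remark \ref{rem2.1}(1), the identity $P_{ij}^{\beta}(bj)=P_{ij}^{\beta}(b)\,j$, and formula (\ref{diag2}) to apply the Haar state — I expect to obtain
\[
C(b,c,j)=\lambda_{\beta}\,\bigl(P_{\iota}(bjc^{*})\bigr)\otimes p_{\iota},
\]
with $\lambda_{\beta}>0$, where $P_{\iota}=E$ is the conditional expectation of $B$ onto $B^{\delta}$ (the precise value of $\lambda_{\beta}$ is immaterial, as it does not affect the closed linear span). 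This explicit compression, the analogue for the $p_{\iota}$-corner of Lemma \ref{quantum2.10}, is the step I expect to be the main obstacle. Granting it, the right-hand side equals $\overline{\mathrm{linspan}\{E(bjc^{*}):\beta\in\widehat{G},\ b,c\in B_{\beta}^{\delta},\ j\in J\}}\otimes 1$.

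It then remains to identify this span with $C^{\delta}\otimes 1$. Since $C$ is $\delta$-invariant, $E$ restricts to the canonical conditional expectation of $C$ onto $C^{\delta}$, so $C^{\delta}=E(C)=\overline{E(\mathrm{linspan}\,BJB)}$. Decomposing the two outer factors into spectral subspaces and using $j\in B_{\iota}^{\delta}$, the element $E(x\,j\,y)$ with $x\in B_{\beta'}^{\delta}$, $y\in B_{\gamma}^{\delta}$ vanishes unless $\iota\subseteq\beta'\otimes\gamma$, i.e. unless $\gamma=\overline{\beta'}$; writing $y=c^{*}$ with $c\in B_{\beta'}^{\delta}$ gives $C^{\delta}=\overline{\mathrm{linspan}\{E(bjc^{*}):\beta\in\widehat{G},\ b,c\in B_{\beta}^{\delta},\ j\in J\}}$. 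Comparing with the previous paragraph yields the asserted equality $C^{\delta}\otimes 1$ equals the right-hand side. As a consistency check, the forward inclusion alone also follows more softly by applying Lemma \ref{cc} to the system $(C,G,\delta)$ together with the containments $S_{\iota,\beta}(J\otimes 1)\subseteq S_{\iota,\beta}^{C}$ and $(J\otimes 1)S_{\beta,\iota}\subseteq S_{\beta,\iota}^{C}$ (for the analogous algebras $S^{C}$ of the restricted system) and the factorization $J=\overline{J^{2}}$.
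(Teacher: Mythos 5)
Your proposal is correct, but it takes a genuinely different route from the paper's. The paper's proof is purely structural and contains no computation: it applies Lemma \ref{cc} (with $\alpha=\iota$) to the restricted system $(C,G,\delta|_{C})$ together with Remark \ref{rem1}(1), giving $C^{\delta}\otimes 1=S^{C}_{\iota}=\overline{linspan\{S^{C}_{\iota,\beta}S^{C}_{\beta,\iota}\,|\,\beta\in\widehat{G}\}}$, and then identifies this with the right-hand side of the statement via the (implicit) equality $C\times_{\delta}G=\overline{(B\times_{\delta}G)(J\otimes 1)(B\times_{\delta}G)}$ --- the analogue of the step $S^{D}_{\alpha,\iota}=\overline{(J\otimes 1)S_{\alpha,\iota}(J\otimes 1)}$ used later in Proposition \ref{propsimple} --- since $\sum_{\beta}p_{\beta}=1$ strictly collapses the $\beta$-sum on both sides to the corner $(1\otimes p_{\iota})(C\times_{\delta}G)(1\otimes p_{\iota})$. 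Indeed, your closing ``consistency check'' is, run in both directions, essentially the paper's entire proof. Your main route instead evaluates the generators explicitly, and it does go through: the compression formula you flagged as the principal obstacle is true, and is in fact easier than a rerun of Lemma \ref{quantum2.10}, because for $c\in B^{\delta}_{\beta}$ the operator $\delta(c^{\ast})(1\otimes p_{\iota})$ already has range inside $H_{u}\otimes p_{\beta}H_{h}$ (its image is spanned by vectors of the form $c^{\ast}_{r_{0}l}\eta\otimes u^{\beta\ast}_{ls_{0}}\xi_{h}$, and the $u^{\beta\ast}_{ij}\xi_{h}$ span $p_{\beta}H_{h}$); hence the middle projection may simply be dropped, yielding $(1\otimes p_{\iota})\delta(bj)(1\otimes p_{\beta})\delta(c^{\ast})(1\otimes p_{\iota})=(1\otimes p_{\iota})\delta(bjc^{\ast})(1\otimes p_{\iota})=P_{\iota}(bjc^{\ast})\otimes p_{\iota}$, i.e.\ your constant $\lambda_{\beta}$ equals $1$. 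The price of your route is the final step: identifying $\overline{linspan\{P_{\iota}(bjc^{\ast})\}}$ with $C^{\delta}$ uses the representation-theoretic orthogonality $P_{\iota}(B^{\delta}_{\beta'}JB^{\delta}_{\gamma})=(0)$ unless $\gamma=\overline{\beta'}$ (the trivial representation occurs in $u^{\beta'}\odot u^{\gamma}$ only when $\gamma=\overline{\beta'}$), an ingredient the paper never needs and which you should prove rather than assert. In exchange, your argument yields more information than the paper's: it exhibits the generators of the right-hand side concretely as conditional expectations $P_{\iota}(bjc^{\ast})\otimes 1$ with $b,c\in B^{\delta}_{\beta}$, $j\in J$, whereas the soft argument only identifies the closed linear spans.
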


\begin{proof}
Clearly, $\delta(C) \subseteq C\otimes A$, since $\delta(J)=J\otimes1$. The
fact that $\delta(C)(1\otimes A)$ is dense in $C\otimes A$ follows from the
definition of $C$.

The equality $C^{\delta} \otimes1 = \overline{linspan\{S_{\iota,\beta
}(J\otimes1) S_{\beta, \iota}| \beta\in\widehat{G}\}}^{\| \cdot\|}$ follows
from Lemma \ref{cc} and Remark \ref{rem1} (1).
\end{proof}

\section{Connes Spectrum and prime crossed products}

\label{spectra}

A notion of spectrum of an action $\delta$ of a compact group $G$ on a
$C^{\ast}$-algebra $B$ was given in \cite{glp} by Gootman, Lazar, and
Peligrad. They used the spectral subspaces $B_{2}^{\delta}(\alpha)$ to define
the Arveson and Connes spectra and proved that the conjugate $\overline
{\alpha}$ belongs to the Arveson spectrum $Sp(\delta)$ if and only if the
closure of the ideal $S_{\alpha,\iota}*S_{\iota,\alpha}$ is essential in
$S_{\alpha}$ (Proposition 1.3). We are going to use this correspondence rather
than the direct definition given in \cite{glp} to define the spectra for
coactions of a compact quantum group on a $C^{\ast}$-algebra $B$.

\begin{definition}\label{arvconnesspectra}
(1) $Sp(\delta)=\{\alpha\in \widehat{G}|S_{\overline{\alpha},\iota}S_{\iota,\overline{\alpha}}$ is an essential ideal of $S_{\overline{\alpha}}\}$
(2) $\Gamma(\delta)=\underset{C\in\mathcal{H}^{\delta}(B)}{\cap} Sp(\delta|_{C})$
\end{definition}

The connection to the definition in \cite{glp} is made by the following lemma.

\begin{lemma}\label{essential}Let $\alpha \in \widehat{G}$. Then $\alpha \in Sp(\delta)$ if and only if $\overline{B_{2}^{\delta}(u^{\alpha})^{\ast}B_{2}^{\delta}(u^{\alpha})}$ is an essential ideal of $(B\otimes \mathcal{M}_{d_{\alpha}}(\mathbf{C}))^{\delta_{u^{\alpha}}}$.
\end{lemma}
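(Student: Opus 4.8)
The plan is to read the equivalence off the key Lemma \ref{quantum2.10} once essentiality of ideals has been transported through the two bridges appearing there: the projection $Q$ onto the $ad(v)$-fixed points and the isomorphism $\Psi$. Write $I=\overline{S_{\alpha,\iota}S_{\iota,\alpha}}$ and note first that $I$ is a closed two-sided, $ad(v)$-invariant ideal of the corner $S_\alpha$; indeed $S_\alpha S_{\alpha,\iota}\subseteq S_{\alpha,\iota}$ and $S_{\iota,\alpha}S_\alpha\subseteq S_{\iota,\alpha}$ because $1\otimes p_\alpha$ and $1\otimes p_\iota$ are projections in the multiplier algebra, while $ad(v)$-invariance comes from the way $v$ interacts with the spectral projections $p_\beta$ (as in \cite{ral}). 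Lemma \ref{quantum2.10} says exactly that $\Psi$ carries $\overline{B_2^\delta(u^\alpha)^\ast B_2^\delta(u^\alpha)}$ onto $Q(I)=I\cap S_\alpha^{ad(v)}=I^{ad(v)}$, and $\mathcal{I}_\alpha=S_\alpha^{ad(v)}$. Since $\Psi$ is a $\ast$-isomorphism of $(B\otimes\mathcal{M}_{d_\alpha})^{\delta_{u^\alpha}}$ onto $\mathcal{I}_\alpha$, it preserves essentiality, so it suffices to establish
\[
\overline{S_{\alpha,\iota}S_{\iota,\alpha}}\ \text{essential in}\ S_\alpha\iff I^{ad(v)}\ \text{essential in}\ S_\alpha^{ad(v)}.
\]

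The heart of the argument is this last equivalence, which I would prove abstractly for the faithful conditional expectation $Q\colon S_\alpha\to S_\alpha^{ad(v)}$, $Q(z)=(id\otimes h)(ad(v)(z))$, and the invariant ideal $I$. Two general facts drive it: (i) for any $ad(v)$-invariant closed ideal $J\subseteq S_\alpha$ one has $J=0$ iff $Q(J)=0$, since $0\ne x\in J$ gives $x^\ast x\in J$ with $Q(x^\ast x)\ne0$ by faithfulness; and (ii) $Q$ is an $S_\alpha^{ad(v)}$-bimodule map, so $Q(aza^\ast)=aQ(z)a^\ast$ for $a\in S_\alpha^{ad(v)}$. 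For the forward direction I would assume $I$ essential, i.e. its annihilator $I^\perp$ is zero, and take $a\in S_\alpha^{ad(v)}$ with $aI^{ad(v)}=0$: for $z\in I_+$ one gets $Q(aza^\ast)=aQ(z)a^\ast=0$ because $Q(z)\in I^{ad(v)}$, hence $aza^\ast=0$ by faithfulness, hence $az^{1/2}=0$; letting $z$ range over $I_+$ yields $aI=0$, so $a=0$ and $I^{ad(v)}$ is essential in $S_\alpha^{ad(v)}$.

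For the converse I would use that $I^\perp$ is itself an $ad(v)$-invariant ideal, so by (i) it is nonzero iff $(I^\perp)^{ad(v)}=Q(I^\perp)\ne0$; but any $c\in(I^\perp)^{ad(v)}$ annihilates $I^{ad(v)}\subseteq I$ and lies in $S_\alpha^{ad(v)}$, so it is killed by the essentiality of $I^{ad(v)}$ there, forcing $I^\perp=0$ and $I$ essential. Finally I would match indices with the definition: by Definition \ref{arvconnesspectra}(1), $\alpha\in Sp(\delta)$ means $\overline{S_{\overline\alpha,\iota}S_{\iota,\overline\alpha}}$ is essential in $S_{\overline\alpha}$, so I would run the whole transport with $\overline\alpha$, observing that the conjugation built into $p_\alpha$ and into $P^\alpha_{ij}$ through the coefficients ${u^\alpha_{ij}}^\ast$ is precisely what aligns the $S$-side label with the $B_2^\delta(u^\alpha)$-side label in the stated conclusion.

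The step I expect to be the main obstacle is pinning down the two structural prerequisites cleanly, namely that $\overline{S_{\alpha,\iota}S_{\iota,\alpha}}$ is genuinely $ad(v)$-invariant and that $Q$ restricts to a \emph{faithful} conditional expectation of $S_\alpha$ onto $\mathcal{I}_\alpha$ with $Q(I)=I^{ad(v)}$; once these are secured, the essentiality transfer (i)--(ii) and the isomorphism property of $\Psi$ make the remainder formal. The $\alpha$ versus $\overline\alpha$ bookkeeping is the secondary point to watch, but it is forced by the conventions already fixed in the Preliminaries rather than requiring any new input.
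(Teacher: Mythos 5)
Your proposal is correct, and it coincides with the paper's proof in its overall reduction and in one of the two directions, while your converse is a genuinely different argument. Like the paper, you transport the problem through Lemma \ref{quantum2.10} and the isomorphism $\Psi$, reducing everything to: $I=\overline{S_{\overline{\alpha},\iota}S_{\iota,\overline{\alpha}}}$ is essential in $S_{\overline{\alpha}}$ if and only if $Q(I)=I^{ad(v)}$ is essential in $\mathcal{I}_{\overline{\alpha}}$. Your forward implication is exactly the paper's: the paper runs it as a contradiction, producing a positive $c\in\mathcal{I}_{\overline{\alpha}}$ with $c\,\mathcal{P}(S_{\overline{\alpha},\iota}S_{\iota,\overline{\alpha}})\,c=0$, pulling $c$ inside $\mathcal{P}$ by the bimodule property and invoking faithfulness --- which is your computation $Q(aza^{\ast})=aQ(z)a^{\ast}$ read in contrapositive form. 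For the converse, however, the paper does not argue abstractly: it invokes Remark 3.5 of \cite{ral}, which factorizes $S_{\overline{\alpha}}$ as $I(\overline{\alpha})\otimes\mathcal{I}_{\overline{\alpha}}$ with $I(\overline{\alpha})=\widehat{A}p_{\overline{\alpha}}$ finite dimensional, identifies $\mathcal{I}_{\overline{\alpha}}$ with the diagonal copy inside this matrix amplification, and transports essentiality through that concrete picture, together with the identity $\mathcal{P}(\overline{S_{\overline{\alpha},\iota}S_{\iota,\overline{\alpha}}})=\mathcal{I}_{\overline{\alpha}}\cap\overline{S_{\overline{\alpha},\iota}S_{\iota,\overline{\alpha}}}$. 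You replace this structural input by the annihilator argument: $Q$ carries $I^{\perp}$ into the annihilator of $I^{ad(v)}$ inside $\mathcal{I}_{\overline{\alpha}}$, which is zero by hypothesis, so $I^{\perp}=0$ by faithfulness. (You do not even need your claim that $I^{\perp}$ is $ad(v)$-invariant: for $a\in I^{\perp}$ and $y\in I^{ad(v)}$ the bimodule property gives $Q(a)y=Q(ay)=0$ directly.) Your route is more elementary and reusable --- it is really the general statement that for a faithful conditional expectation $E\colon C\to D$ and a closed ideal $J\subseteq C$ with $E(J)\subseteq J$, $J$ is essential in $C$ if and only if $J\cap D$ is essential in $D$ --- and it removes the dependence on the structure theorem of \cite{ral}; the paper's route is shorter given that Remark 3.5 is already available and it yields an explicit picture of $S_{\overline{\alpha}}$ and of the ideal correspondence. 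Both proofs rest on the same two prerequisites, which you rightly flag and the paper uses silently: faithfulness of $Q$, and $ad(v)$-invariance of $I$ (needed for $Q(I)\subseteq I$), the latter following, as you indicate, from the fact that the projections $1\otimes p_{\beta}$ are fixed by $ad(v)$ because $v$ commutes with $p_{\beta}\otimes1$. Finally, your caution about the $\alpha$ versus $\overline{\alpha}$ bookkeeping is warranted but is not a defect you introduced: the paper itself applies Lemma \ref{quantum2.10} with the $S$-side labeled $\overline{\alpha}$ and the $B_{2}^{\delta}$-side labeled $\alpha$, even though that lemma is stated and proved with the same label on both sides, so the convention mismatch is inherited from the paper rather than created by your argument.
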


\begin{proof}
Let $\alpha\in Sp(\delta)$ and assume to the contrary that $\overline
{B_{2}^{\delta}(u^{\alpha})^{\ast}B_{2}^{\delta}(u^{\alpha})}$ is not an
essential ideal of $(B\otimes\mathcal{M}_{d_{\alpha}}(\mathbf{C}%
))^{\delta_{u^{\alpha}}}$.

Using Lemma \ref{quantum2.10}, there exists a positive, non-zero element
$c\in\mathcal{I}_{\overline{\alpha}}$, such that $c\mathcal{P}(S_{\overline
{\alpha},\iota}S_{\iota, \overline{\alpha}})c=0$. Since , in particular, $c\in
S_{\overline{\alpha}}$ then $\mathcal{P}(c(S_{\overline{\alpha},\iota}%
S_{\iota, \overline{\alpha}})c)=0$. The faithfulness of $\mathcal{P}$ implies
now that $c(S_{\overline{\alpha},\iota}S_{\iota, \overline{\alpha}})c=0$,
which is a contradiction with $\alpha\in Sp(\delta)$.

Conversely, assume that $\overline{B_{2}^{\delta}(u^{\alpha})^{\ast}%
B_{2}^{\delta}(u^{\alpha})}$ is an essential ideal of $(B\otimes
\mathcal{M}_{d_{\alpha}}(\mathbf{C}))^{\delta_{u^{\alpha}}}$. By Lemma
\ref{quantum2.10}, $\mathcal{P}(S_{\overline{\alpha},\iota}S_{\iota,
\overline{\alpha}})$ is an essential ideal in $\mathcal{I}_{\overline{\alpha}%
}$. Using the same Lemma,%

\[
\mathcal{P}(\overline{S_{\overline{\alpha},\iota}S_{\iota, \overline{\alpha}}%
})=\mathcal{I}_{\overline{\alpha}}\cap(\overline{S_{\overline{\alpha},\iota
}S_{\iota, \overline{\alpha}}}).
\]

By Remark 3.5 in \cite{ral}, $S_{\overline{\alpha}}$ is isomorphic to
$I({\overline{\alpha}})\otimes\mathcal{I}_{\overline{\alpha}}$, where
$I({\overline{\alpha}})=\widehat{A}p_{\overline{\alpha}}$. It is easy to check
that the image of $\mathcal{I}_{\overline{\alpha}}\subset S_{\overline{\alpha
}}$ by this isomorphism is $\chi_{\overline{\alpha}}\otimes
\{diag(x,x,...,x)|x\in\mathcal{I}_{\overline{\alpha}}\}$, where
$diag(x,x,...,x)$ is the $d_{\alpha}\times d_{\alpha}$ matrix with all the
diagonal elements equal to $x$ and all the others equal to $0$. Thus
$\{diag(y,y,...,y)|y\in\mathcal{I}_{\overline{\alpha}}\cap\overline
{S_{\overline{\alpha},\iota}S_{\iota, \overline{\alpha}}}\}$ is essential in
$\{diag(x,x,...,x)|x\in\mathcal{I}_{\overline{\alpha}}\}$. This implies that
$\overline{S_{\overline{\alpha},\iota}S_{\iota, \overline{\alpha}}}$ is
essential in $S_{\overline{\alpha}}$.
\end{proof}

\begin{proposition}\label{prop}If $B$ is $G$-prime and $\Gamma(\delta)=\widehat{G}$, then $B^{\delta}$ is prime.
\end{proposition}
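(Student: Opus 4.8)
The plan is to prove the contrapositive: assuming $B^{\delta}$ is not prime, I would produce two non-zero $\delta$-invariant ideals of $B$ whose product is zero, contradicting $G$-primeness, \emph{or} exhibit an $\alpha\in\widehat{G}$ and a hereditary $\delta$-invariant subalgebra $C$ with $\alpha\notin Sp(\delta|_C)$, contradicting $\Gamma(\delta)=\widehat{G}$. Since $B^{\delta}$ is not prime, there exist non-zero two-sided ideals $J_0,J_1\subset B^{\delta}$ with $J_0 J_1=(0)$ (or equivalently $J_1 B^{\delta} J_0=(0)$). The natural move is to pass from ideals in the fixed-point algebra to hereditary $\delta$-invariant subalgebras of $B$ via Remark \ref{rem1}(4): set $D_0=\overline{J_0 B J_0}$ and $D_1=\overline{J_1 B J_1}$, both in $\mathcal{H}^{\delta}(B)$.

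The heart of the argument is to translate the vanishing $J_1 B^{\delta} J_0=(0)$ into information about spectral subspaces across all of $\widehat{G}$. First I would use Remark \ref{rem1}(2), which says that for $a_0,a_1\in B^{\delta}$ and any $\alpha\in\widehat{G}$, the condition $a_1 B_{\alpha}^{\delta} a_0=(0)$ is equivalent to $(a_0\otimes 1)S_{\alpha,\iota}(a_1\otimes 1)=(0)$. The key observation is that primeness of $B^{\delta}$ controls only the trivial spectral subspace $B_{\iota}^{\delta}=B^{\delta}$, so the obstruction to primeness of $B^{\delta}$ must be absorbed by the non-trivial spectral subspaces $B_{\alpha}^{\delta}$ with $\alpha\ne\iota$. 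Concretely, I expect that $G$-primeness of $B$ forces $J_1 B_{\alpha}^{\delta} J_0\ne(0)$ for some $\alpha\ne\iota$: if instead $J_1 B_{\alpha}^{\delta} J_0=(0)$ for all $\alpha$, then by Remark \ref{rem2.1}(2) (which expresses $B_{\alpha}^{\delta}$ as the linear span of the $P_{ij}^{\alpha}(x)$) together with density of $\bigoplus_{\alpha} B_{\alpha}^{\delta}$ in $B$, one would get $J_1 B J_0=(0)$, hence $D_0 D_1=(0)$ with both factors non-zero and $\delta$-invariant, contradicting $G$-primeness.

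So fix $\alpha\ne\iota$ with $J_1 B_{\alpha}^{\delta} J_0\ne(0)$. Now I restrict attention to the hereditary $\delta$-invariant subalgebra $C=\overline{B J_0 B}$ supplied by Corollary \ref{cor}, whose fixed-point algebra satisfies $C^{\delta}\otimes 1=\overline{\mathrm{linspan}\{S_{\iota,\beta}(J_0\otimes 1)S_{\beta,\iota}\mid\beta\in\widehat{G}\}}$. The aim is to show $\alpha\notin Sp(\delta|_C)$, equivalently (by Definition \ref{arvconnesspectra}(1)) that $S_{\overline{\alpha},\iota}^{C}S_{\iota,\overline{\alpha}}^{C}$ fails to be essential in $S_{\overline{\alpha}}^{C}$, which contradicts $\Gamma(\delta)=\widehat{G}$ and finishes the proof. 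To detect non-essentiality I would exhibit a non-zero element of $S_{\overline{\alpha}}^{C}$ that annihilates $S_{\overline{\alpha},\iota}^{C}S_{\iota,\overline{\alpha}}^{C}$; the element $a_0=$ a suitable positive part of $J_0$ (viewed inside $C^{\delta}$) is the candidate, using Remark \ref{rem1}(2) in the direction $(a_0\otimes 1)S_{\alpha,\iota}(a_1\otimes 1)=(0)$ translated back to $a_1 B_{\alpha}^{\delta} a_0=(0)$ for $a_1$ drawn from $J_1$.

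The main obstacle I anticipate is the bookkeeping between the ideals $J_0,J_1$ of $B^{\delta}$ and the correct choice of hereditary subalgebra $C$ on which to test the spectrum: one must ensure that the restricted objects $S_{\cdot,\cdot}^{C}$, $B_{\alpha}^{\delta|_C}$ genuinely encode the vanishing $J_1 B_{\alpha}^{\delta} J_0$ while keeping the relevant element non-zero, so that the essentiality fails in $C$ rather than accidentally in $B$. In particular, the delicate point is to arrange that the \emph{same} $\alpha$ witnessing $J_1 B_{\alpha}^{\delta} J_0\ne(0)$ is also the one on which essentiality breaks after restricting to $C$, which requires carefully matching $\alpha$ with $\overline{\alpha}$ in Definition \ref{arvconnesspectra}(1) and invoking Remark \ref{rem1}(2) with the conjugate class. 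Once this matching is pinned down, the equivalence in Remark \ref{rem1}(2) converts the annihilation statement into the desired failure of essentiality, and $G$-primeness rules out the degenerate alternative.
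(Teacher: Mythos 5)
Your skeleton is the same as the paper's (argue by contradiction, use $G$-primeness to find a class $\alpha_{0}$ with $a_{1}B^{\delta}_{\alpha_{0}}a_{0}\ne(0)$, then restrict to a hereditary invariant subalgebra and play essentiality against that non-vanishing), but both places where the real work happens are broken. First, the $G$-primeness step: the algebras $D_{0}=\overline{J_{0}BJ_{0}}$ and $D_{1}=\overline{J_{1}BJ_{1}}$ from Remark \ref{rem1}(4) are hereditary $\delta$-invariant subalgebras, \emph{not} ideals, and $G$-primeness as defined in the paper is a statement about $\delta$-invariant ideals. Worse, $D_{0}D_{1}=(0)$ holds automatically from $J_{0}J_{1}=(0)$ alone, since $J_{0}BJ_{0}\cdot J_{1}BJ_{1}=J_{0}B(J_{0}J_{1})BJ_{1}=(0)$; no use of $J_{1}BJ_{0}=(0)$ is needed. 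So if $D_{0}D_{1}=(0)$ contradicted $G$-primeness, your argument would prove the proposition without ever invoking $\Gamma(\delta)=\widehat{G}$, which cannot be right. The correct move (this is what underlies the paper's line ``since $B$ is $G$-prime, $a_{1}Ba_{0}\ne(0)$'') is to use the $\delta$-invariant \emph{ideals} $\overline{BJ_{1}B}$ and $\overline{BJ_{0}B}$: their product lies in $\overline{BJ_{1}BJ_{0}B}$, so $J_{1}BJ_{0}=(0)$ genuinely contradicts $G$-primeness. This part is fixable.

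The second gap is the serious one: your essentiality step has no mechanism. With $C=\overline{BJ_{0}B}$ the spaces $S^{C}_{\overline{\alpha},\iota}$ are not compressions by $J_{0}$ on both sides ($C$ is a large ideal), so the hypothesis $J_{1}B^{\delta}J_{0}=(0)$ cannot be transported into an annihilation statement about $S^{C}_{\overline{\alpha},\iota}S^{C}_{\iota,\overline{\alpha}}$; and your candidate annihilator, a positive element $a_{0}$ of $J_{0}$, involves only $J_{0}$, whereas any annihilation that follows from $J_{1}B^{\delta}J_{0}=(0)$ must involve $J_{1}$ as well --- nothing in the hypotheses makes $a_{0}$ kill $C_{\overline{\alpha}}$, so $a_{0}$ (or $a_{0}\otimes p_{\overline{\alpha}}$) simply does not annihilate those spaces. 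The paper's proof solves exactly this problem, and that solution is its crux: take positive $a_{0},a_{1}\in B^{\delta}$ with $a_{1}B^{\delta}a_{0}=(0)$, set $C=\overline{a_{0}Ba_{0}}$ (compression by a single positive element, not the ideal it generates), so that $S^{C}_{\overline{\alpha_{0}},\iota}S^{C}_{\iota,\overline{\alpha_{0}}}$ is literally $(a_{0}\otimes1)S_{\overline{\alpha_{0}},\iota}(a_{0}^{2}\otimes1)S_{\iota,\overline{\alpha_{0}}}(a_{0}\otimes1)$, and manufacture the annihilator $(a_{0}\otimes1)S_{\overline{\alpha_{0}},\iota}(a_{1}^{2}\otimes1)S_{\iota,\overline{\alpha_{0}}}(a_{0}\otimes1)\subseteq S^{C}_{\overline{\alpha_{0}}}$, which involves both $a_{0}$ and $a_{1}$. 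The annihilation identity comes from $a_{1}B^{\delta}a_{0}=(0)$ via $S_{\iota,\overline{\alpha_{0}}}S_{\overline{\alpha_{0}},\iota}\subseteq S_{\iota}=B^{\delta}\otimes1$, followed by multiplying on the left and right by suitable factors and regrouping the resulting product of six terms into (first factor)(second factor)$=(0)$. Essentiality, i.e.\ $\overline{\alpha_{0}}\in\Gamma(\delta)$, then forces the first factor to vanish, hence $(a_{0}\otimes1)S_{\overline{\alpha_{0}},\iota}(a_{1}\otimes1)=(0)$, and Remark \ref{rem1}(2) converts this into $a_{1}B^{\delta}_{\alpha_{0}}a_{0}=(0)$, contradicting the choice of $\alpha_{0}$. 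Without this compression-and-regrouping device (or an equivalent), your proposal does not close; the ``bookkeeping'' you defer is precisely the proof.
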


\begin{proof}
Assume, to the contrary, that $B^{\delta}$ is not prime. Then there exist two
non-zero positive elements $a_{0},a_{1}\in B^{\delta}$ such that
$a_{1}B^{\delta}a_{0}=(0)$. Since $B$ is $G$-prime, $a_{1}Ba_{0}\ne(0)$. On
the other hand, since $B$ is the closure of the linear span of its spectral
subspaces $\{B^{\delta}_{\alpha}|\alpha\in\widehat{G}\}$, then there exists
$\alpha_{0}\in\widehat{G}$ such that
\begin{equation}
\label{0}a_{1}B^{\delta}_{\alpha_{0}}a_{0}\ne(0)
\end{equation}

Since $a_{1}B^{\delta}a_{0}=(0)$, Remark \ref{rem1}(1) above implies that%

\[
(1\otimes p_{\iota})((a_{1}\otimes1)(B\times_{\delta}G))(1\otimes p_{\alpha
})(B\times_{\delta}G)(a_{0}\otimes1)(1\otimes p_{\iota})=(0)
\]
that is%

\[
(a_{1}\otimes1)S_{\iota,\overline{\alpha_{0}}}S_{\overline{\alpha_{0}},\iota
}(a_{0}\otimes1)=(0)
\]
Therefore, since $S_{\iota,\overline{\alpha_{0}}}(a_{0}^{2}\otimes
1)S_{\overline{\alpha_{0}},\iota}\subset S_{\iota,\overline{\alpha_{0}}%
}S_{\overline{\alpha_{0}},\iota}$, then%

\begin{equation}
(a_{1}\otimes1)S_{\iota,\overline{\alpha_{0}}}(a_{0}^{2}\otimes1)S_{\overline
{\alpha_{0}},\iota}(a_{0}\otimes1)=(0)
\end{equation}

Multiply the above equation to the left by $(a_{0}\otimes1)S_{\overline
{\alpha_{0}},\iota}(a_{1}\otimes1)$ and to the right by $(a_{0}\otimes
1)S_{\iota,\overline{\alpha_{0}}}(a_{0}\otimes1)$. We get:%

\begin{equation}
\label{aaa}(a_{0}\otimes1)S_{\overline{\alpha_{0}},\iota}(a_{1}^{2}%
\otimes1)S_{\iota,\overline{\alpha_{0}}}(a_{0}^{2}\otimes1)S_{\overline
{\alpha_{0}},\iota}(a_{0}^{2}\otimes1)S_{\iota,\overline{\alpha_{0}}}%
(a_{0}\otimes1)=(0)
\end{equation}

Regroup the terms in the left-hand side of equation \ref{aaa} as:%

\begin{equation}
\label{bbb}[(a_{0}\otimes1)S_{\overline{\alpha_{0}},\iota}(a_{1}^{2}%
\otimes1)S_{\iota,\overline{\alpha_{0}}}(a_{0}\otimes1)][(a_{0}\otimes
1)S_{\overline{\alpha_{0}},\iota}(a_{0}^{2}\otimes1)S_{\iota,\overline
{\alpha_{0}}}(a_{0}\otimes1)]=(0)
\end{equation}

Let $C=\overline{a_{0}Ba_{0}}$. Clearly, $C\in\mathcal{H}^{\delta}(B)$. The
second factor in the left-hand side of equation \ref{bbb} is just
$S^{c}_{\overline{\alpha_{0}},\iota}S^{c}_{\iota,\overline{\alpha_{0}}}$ where
$S^{c}_{\alpha,\beta}$ denotes the corresponding subspace of the crossed
product $C\times_{\delta}G$.

Since $\Gamma(\delta)=\widehat{G}$, then $\overline{\alpha_{0}}\in
\Gamma(\delta)$. Therefore $S^{c}_{\overline{\alpha_{0}},\iota}S^{c}%
_{\iota,\overline{\alpha_{0}}}$ is an essential ideal of $S^{c}_{\overline
{\alpha_{0}}}$. Since the first factor in the equation \ref{bbb} is included
in $S^{c}_{\overline{\alpha_{0}}}$, it follows that it equals (0). In
particular,
\[
(a_{0}\otimes1)S_{\overline{\alpha_{0}},\iota}(a_{1}\otimes1)=(0).
\]
Using Remark \ref{rem1} (2), this means that $a_{1}B^{\delta}_{\alpha_{0}%
}a_{0}=(0)$, which is a contradiction with relation \ref{0}.
\end{proof}

We will prove next the main result of this section. The result is a
generalization of \cite{glp}, Theorem 2.2.

\begin{theorem}\label{primeness}
The following are equivalent:
\begin{itemize}
\item[(1)]$B\times_{\delta}G$ is prime
\item[(2)]$B$ is $G$-prime and $\Gamma(\delta)=\widehat{G}$.
\end{itemize}
\end{theorem}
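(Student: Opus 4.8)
The plan is to prove the equivalence by establishing each implication separately, relying heavily on the machinery already developed, especially Proposition \ref{prop} and the characterization of the Connes spectrum via essential ideals.

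\medskip

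For the implication $(2)\Rightarrow(1)$, I would argue by contradiction. Suppose $B\times_\delta G$ is not prime, so there exist two non-zero closed two-sided ideals $\mathcal{J}_0,\mathcal{J}_1$ of $B\times_\delta G$ with $\mathcal{J}_1(B\times_\delta G)\mathcal{J}_0=(0)$. The strategy is to push this down to the fixed-point level. Using the projection $Q$ onto $(B\times_\delta G)^{ad(v)}$ and the spectral decomposition via the projections $1\otimes p_\alpha$, I would intersect these ideals with $S_\iota=B^\delta\otimes 1$ (Remark \ref{rem1}(1)) to produce non-zero positive elements $a_0,a_1\in B^\delta$. The key point is that $G$-primeness together with $\Gamma(\delta)=\widehat{G}$ should force the corresponding fixed-point products to vanish, contradicting Proposition \ref{prop}, which already tells us that under these hypotheses $B^\delta$ is prime. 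Concretely, I would aim to show that non-primeness of the crossed product produces a genuine violation of primeness of $B^\delta$, using the identification of elements of $S_{\alpha,\iota}$ as matrices over $B$ and Remark \ref{rem1}(2) to translate ideal conditions in the crossed product into spectral-subspace conditions $a_1 B^\delta_\alpha a_0=(0)$.

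\medskip

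For the converse $(1)\Rightarrow(2)$, I would again argue contrapositively and split into two cases. First, if $B$ is not $G$-prime, there exist non-zero $\delta$-invariant ideals whose product is zero; using Remark \ref{rem1}(3) and Corollary \ref{cor}, I would lift these to complementary ideals in the crossed product (since $C\times_\delta G$ is hereditary in $B\times_\delta G$ for $C\in\mathcal{H}^\delta(B)$), showing $B\times_\delta G$ is not prime. Second, if $\Gamma(\delta)\neq\widehat{G}$, there is some $C\in\mathcal{H}^\delta(B)$ and some $\alpha\notin Sp(\delta|_C)$, meaning $S^c_{\overline{\alpha},\iota}S^c_{\iota,\overline{\alpha}}$ is \emph{not} essential in $S^c_{\overline{\alpha}}$. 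The plan is to extract a non-zero element orthogonal to this ideal and, via Lemma \ref{essential} and Lemma \ref{quantum2.10}, produce two non-zero orthogonal hereditary subalgebras (or ideals) in the crossed product whose product vanishes, again defeating primeness.

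\medskip

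The main obstacle I anticipate is the $(1)\Rightarrow(2)$ direction when $\Gamma(\delta)\neq\widehat{G}$: translating the failure of essentiality of the ideal $S^c_{\overline{\alpha},\iota}S^c_{\iota,\overline{\alpha}}$ inside $S^c_{\overline{\alpha}}$ into a genuine pair of orthogonal ideals of the full crossed product $B\times_\delta G$ requires carefully controlling how the corner $S_\alpha$ and the fixed-point algebra of $ad(v)$ interact, and ensuring that the annihilating element survives when passed through $Q$ and the isomorphism $S_{\overline{\alpha}}\cong I(\overline{\alpha})\otimes\mathcal{I}_{\overline{\alpha}}$ from Remark 3.5 of \cite{ral}. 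Getting the correct hereditary subalgebra $C$ and verifying that the resulting ideals are genuinely non-zero and mutually annihilating is where the bulk of the technical work will lie; the $G$-primeness half and the $(2)\Rightarrow(1)$ direction should follow more directly once Proposition \ref{prop} and the corner-algebra identifications are in hand.
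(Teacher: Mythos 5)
Your proposal inverts both directions of the paper's argument into contrapositives, which is legitimate in principle, but at the two load-bearing points the plan either misassigns the role of the hypotheses or relies on a criterion that does not work. In the direction $(2)\Rightarrow(1)$, the step ``intersect the ideals $\mathcal{J}_0,\mathcal{J}_1$ with $S_{\iota}=B^{\delta}\otimes 1$ to produce non-zero positive elements $a_0,a_1$'' is precisely the step that can fail: the compression $(1\otimes p_{\iota})\mathcal{J}(1\otimes p_{\iota})$ of a non-zero ideal of $B\times_{\delta}G$ can be zero (for the trivial coaction on $B=\mathbb{C}$ the crossed product is $\widehat{A}\cong\bigoplus_{\alpha}\mathcal{M}_{d_{\alpha}}$ and each ideal $\mathcal{M}_{d_{\alpha}}$, $\alpha\neq\iota$, compresses to $0$). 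The hypothesis $\Gamma(\delta)=\widehat{G}$ is needed exactly here, not where you put it: if $(1\otimes p_{\iota})\mathcal{J}(1\otimes p_{\iota})=0$ then $\mathcal{J}(1\otimes p_{\iota})=0$, so for each $\alpha$ the ideal $(1\otimes p_{\alpha})\mathcal{J}(1\otimes p_{\alpha})$ of $S_{\alpha}$ annihilates $\overline{S_{\alpha,\iota}S_{\iota,\alpha}}$, which is essential by hypothesis; hence all these compressions vanish and, since $\sum_{\alpha}p_{\alpha}=1$ strictly, $\mathcal{J}=0$. By contrast, the vanishing $a_1B^{\delta}a_0=(0)$ is automatic from $\mathcal{J}_1(B\times_{\delta}G)\mathcal{J}_0=(0)$ and needs no hypothesis at all. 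With that nondegeneracy lemma supplied, your contradiction with Proposition \ref{prop} does close, and it is genuinely different from the paper, which instead argues directly: $B^{\delta}$ prime gives $S_{\iota}$ prime, Morita equivalence of $\overline{S_{\alpha,\iota}S_{\iota,\alpha}}$ with the prime ideal $\overline{S_{\iota,\alpha}S_{\alpha,\iota}}$ plus essentiality gives each $S_{\alpha}$ prime, and then \cite{ral}, Corollary 4.9 passes back to $B\times_{\delta}G$.

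In the direction $(1)\Rightarrow(2)$ the gap is sharper. Producing ``two non-zero orthogonal hereditary subalgebras'' of $B\times_{\delta}G$ proves nothing: complementary corners of $\mathcal{M}_{2}$ have zero product and $\mathcal{M}_{2}$ is prime; non-primeness requires two non-zero \emph{ideals} with zero product. The machinery you would need to lift a corner-level failure of essentiality to such a pair of ideals is exactly the contrapositive of the two standard facts the paper uses directly: hereditary C*-subalgebras (in particular corners) of prime C*-algebras are prime, and in a non-zero prime C*-algebra every non-zero ideal is essential. The paper's route is short: $C\times_{\delta}G$ is hereditary in the prime algebra $B\times_{\delta}G$ (Remark \ref{rem1}(3)), hence prime; by \cite{ral}, Corollary 4.9 the algebra $(C\otimes\mathcal{M}_{d_{\alpha}})^{\delta_{u^{\alpha}}}$ is prime and $C_{2}^{\delta}(\alpha)\neq(0)$; hence $\overline{C_{2}^{\delta}(\alpha)^{\ast}C_{2}^{\delta}(\alpha)}$ is essential, i.e. $\alpha\in Sp(\delta|_{C})$ by Lemma \ref{essential}. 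Note also that your sketch never addresses non-vanishing: ``not essential'' includes the degenerate case $S^{c}_{\overline{\alpha},\iota}S^{c}_{\iota,\overline{\alpha}}=(0)$ (equivalently $S^{c}_{\overline{\alpha},\iota}=0$, since $S_{\iota,\overline{\alpha}}=S_{\overline{\alpha},\iota}^{\ast}$), which must be treated separately, and the non-vanishing of spectral subspaces that makes the paper's ideal non-zero is part of what Corollary 4.9 of \cite{ral} supplies. Since your plan never invokes that bridge between the crossed product and its corners, and openly defers the hardest step, it is an outline rather than a proof; its architecture can be salvaged, but only by adding exactly the ingredients above.
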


\begin{proof}
Assume that $B\times_{\delta}G$ is prime. Since for every $\delta$-invariant
ideal $J\subset B$, $J\times_{\delta}G$ is an ideal of $B\times_{\delta}G$,
the fact that $B$ is $G$-prime is immediate. We will show next that
$\Gamma(\delta)=\widehat{G}$.

Let $C\in\mathcal{H}^{\delta}(B)$ and $\alpha\in\widehat{G}$. By Remark
\ref{rem1} (3) above, $C\times_{\delta}G$ is a hereditary subalgebra of
$B\times_{\delta}G$ and is therefore prime. Using \cite{ral} Corollary 4.9,
$(C\otimes\mathcal{M}_{d_{\alpha}})^{\delta_{u^{\alpha}}}$ is prime and
$C_{2}^{\delta}(\alpha)\ne(0)$ (since $C^{\delta}_{\alpha}\ne(0)$). Thus the
ideal $\overline{C_{2}^{\delta}(\alpha)^{\ast}C_{2}^{\delta}(\alpha)}$ is
essential in $(C\otimes\mathcal{M}_{d_{\alpha}})^{\delta_{u^{\alpha}}}$.
Therefore $\alpha\in\Gamma(\delta)$ and so $\Gamma(\delta)=\widehat{G}$.

Conversely, assume that $B$ is $G$-prime and $\Gamma(\delta)=\widehat{G}$.

For each $\alpha\in\widehat{G}$, the $C^{\ast}$-algebras $\overline
{S_{\alpha,\iota}S_{\iota, \alpha}}$ and $\overline{S_{\iota, \alpha}%
S_{\alpha, \iota}}$ are strongly Morita equivalent ($S_{\alpha, \iota}$ being
the imprimitivity bimodule). By Proposition \ref{prop}, $B^{\delta}$ is prime
and therefore, by Remark \ref{rem1} (1), $S_{\iota}$ is prime. Since
$S_{\iota}$ is prime, so is the ideal $\overline{S_{\iota, \alpha}S_{\alpha,
\iota}}$ and the Morita equivalent algebra $\overline{S_{\alpha,\iota}%
S_{\iota, \alpha}}$. By the definition of $\Gamma(\delta)$, $\overline
{S_{\alpha,\iota}S_{\iota, \alpha}}$ is an essential ideal of $S_{\alpha}$ and
thus $S_{\alpha}$ is prime also. The implication follows now from \cite{ral},
Corollary 4.9.
\end{proof}

\section{Strong Connes spectrum and simple crossed products}

\label{strong}

We begin by defining the strong Arveson and Connes spectra for compact quantum
group coactions.

\begin{definition}\label{strongspectra}\hfill
\begin{itemize}
\item[(1)]Strong Arveson Spectrum
$\tilde{Sp}(\delta)=\{\alpha \in \widehat{G}|\overline{S_{\overline{\alpha},\iota}S_{\iota,\overline{\alpha}}}^{\|\cdot \|}=S_{\overline{\alpha}}\}$.
\item[(2)] Strong Connes spectrum
$\tilde{\Gamma}(\delta)=\underset{c\in\mathcal{H}^{\alpha}(B)}{\cap}\tilde{Sp}(\delta|_{C})$.
\end{itemize}
\end{definition}

Using similar arguments as in Lemma \ref{essential} we obtain the following result.

\begin{lemma}
Let $\alpha \in \widehat{G}$. Then $\alpha \in \tilde{Sp}(\delta)$ if and only if $\overline{B^{\delta}_{2}(u^{\alpha})^{\ast}B^{\delta}_{2}(u^{\alpha})}=(B\otimes \mathcal{M}_{d_{\alpha}})^{\delta_{u^{\alpha}}}$
\end{lemma}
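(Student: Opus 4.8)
The plan is to follow the proof of Lemma \ref{essential} essentially verbatim, replacing the property ``is an essential ideal of'' by ``equals,'' and to use Lemma \ref{quantum2.10} as the sole device transporting statements between $S_{\overline{\alpha}}$ and $(B\otimes\mathcal{M}_{d_{\alpha}})^{\delta_{u^{\alpha}}}$. The two ingredients I would isolate at the outset are exactly those used in Lemma \ref{essential}: (i) that $Q$ restricts to a faithful conditional expectation of $S_{\overline{\alpha}}$ onto its fixed-point subalgebra $\mathcal{I}_{\overline{\alpha}}=(B\times_{\delta}G)^{ad(v)}\cap S_{\overline{\alpha}}$, so that in particular $Q(S_{\overline{\alpha}})=\mathcal{I}_{\overline{\alpha}}$ and, for the $ad(v)$-invariant ideal $\overline{S_{\overline{\alpha},\iota}S_{\iota,\overline{\alpha}}}$, one has $Q(\overline{S_{\overline{\alpha},\iota}S_{\iota,\overline{\alpha}}})=\mathcal{I}_{\overline{\alpha}}\cap\overline{S_{\overline{\alpha},\iota}S_{\iota,\overline{\alpha}}}$; and (ii) that $\Psi$ is an isomorphism of $(B\otimes\mathcal{M}_{d_{\alpha}})^{\delta_{u^{\alpha}}}$ onto $\mathcal{I}_{\overline{\alpha}}$, hence in particular injective and surjective. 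Both facts are available from \cite{ral}, Section 4 and Remark 3.5, and are precisely what the proof of Lemma \ref{essential} draws on.

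For the forward implication, suppose $\alpha\in\tilde{Sp}(\delta)$, i.e.\ $\overline{S_{\overline{\alpha},\iota}S_{\iota,\overline{\alpha}}}=S_{\overline{\alpha}}$. Applying $Q$ to both sides, using Lemma \ref{quantum2.10} on the left and (i) on the right, gives
\[
\Psi(\overline{B_{2}^{\delta}(u^{\alpha})^{\ast}B_{2}^{\delta}(u^{\alpha})})=Q(\overline{S_{\overline{\alpha},\iota}S_{\iota,\overline{\alpha}}})=Q(S_{\overline{\alpha}})=\mathcal{I}_{\overline{\alpha}}=\Psi((B\otimes\mathcal{M}_{d_{\alpha}})^{\delta_{u^{\alpha}}}).
\]
Since $\Psi$ is injective, this forces $\overline{B_{2}^{\delta}(u^{\alpha})^{\ast}B_{2}^{\delta}(u^{\alpha})}=(B\otimes\mathcal{M}_{d_{\alpha}})^{\delta_{u^{\alpha}}}$, as required. (Here I use the same identification of indices $\alpha\leftrightarrow\overline{\alpha}$ that is implicit in Lemma \ref{essential}.)

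For the converse, assume $\overline{B_{2}^{\delta}(u^{\alpha})^{\ast}B_{2}^{\delta}(u^{\alpha})}=(B\otimes\mathcal{M}_{d_{\alpha}})^{\delta_{u^{\alpha}}}$. By Lemma \ref{quantum2.10} and surjectivity of $\Psi$ we get $Q(\overline{S_{\overline{\alpha},\iota}S_{\iota,\overline{\alpha}}})=\Psi((B\otimes\mathcal{M}_{d_{\alpha}})^{\delta_{u^{\alpha}}})=\mathcal{I}_{\overline{\alpha}}$; combining this with (i) yields $\mathcal{I}_{\overline{\alpha}}=\mathcal{I}_{\overline{\alpha}}\cap\overline{S_{\overline{\alpha},\iota}S_{\iota,\overline{\alpha}}}$, that is, $\mathcal{I}_{\overline{\alpha}}\subseteq\overline{S_{\overline{\alpha},\iota}S_{\iota,\overline{\alpha}}}$. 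It then remains to upgrade this to $\overline{S_{\overline{\alpha},\iota}S_{\iota,\overline{\alpha}}}=S_{\overline{\alpha}}$, and here I would invoke the same isomorphism $S_{\overline{\alpha}}\cong I(\overline{\alpha})\otimes\mathcal{I}_{\overline{\alpha}}$ (Remark 3.5 in \cite{ral}) used in Lemma \ref{essential}, under which $\mathcal{I}_{\overline{\alpha}}$ corresponds to $\chi_{\overline{\alpha}}\otimes\{diag(x,\ldots,x)\mid x\in\mathcal{I}_{\overline{\alpha}}\}$. Under this identification the containment just obtained says that the ideal $\overline{S_{\overline{\alpha},\iota}S_{\iota,\overline{\alpha}}}$ contains the full diagonal corner, and since $\chi_{\overline{\alpha}}$ is a full element of $I(\overline{\alpha})=\widehat{A}p_{\overline{\alpha}}$, the ideal generated by that corner is all of $I(\overline{\alpha})\otimes\mathcal{I}_{\overline{\alpha}}$, giving the desired equality.

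I expect the final step of the converse to be the only genuine point of friction. In Lemma \ref{essential} one only had to transport the property ``essential'' through the tensor decomposition, whereas here one must verify that the corner $\chi_{\overline{\alpha}}\otimes\{diag(x,\ldots,x)\}$ is \emph{full}, i.e.\ generates $S_{\overline{\alpha}}$ as a closed two-sided ideal. This reduces to the fullness of $\chi_{\overline{\alpha}}$ in $I(\overline{\alpha})=\widehat{A}p_{\overline{\alpha}}$, which I would check directly from the definitions of $\chi_{\overline{\alpha}}$, $p_{\overline{\alpha}}$, and $\mathcal{F}_{v}$ (equivalently, from the fact that $I(\overline{\alpha})$ is a full matrix block and hence simple, so any nonzero element is full); everything else in the argument is a word-for-word repetition of Lemma \ref{essential}.
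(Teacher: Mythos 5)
Your proof is correct and is essentially the paper's own argument: the paper proves this lemma only by remarking that it follows ``using similar arguments as in Lemma \ref{essential}'', and your write-up is exactly that adaptation, resting on the same ingredients (Lemma \ref{quantum2.10}, the faithful expectation $Q$ onto $\mathcal{I}_{\overline{\alpha}}$, the isomorphism $\Psi$, and the decomposition $S_{\overline{\alpha}}\cong I(\overline{\alpha})\otimes\mathcal{I}_{\overline{\alpha}}$ from Remark 3.5 of \cite{ral}), with the same implicit $\alpha\leftrightarrow\overline{\alpha}$ identification the paper uses. Your closing observation that the diagonal copy of $\mathcal{I}_{\overline{\alpha}}$ is full in $S_{\overline{\alpha}}$ (via simplicity of the finite-dimensional matrix block $I(\overline{\alpha})$) is the correct substitute for the essentiality-transport step of Lemma \ref{essential}, so there is no gap.
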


The following result makes a connection between the strong Connes spectrum and
the simplicity of the fixed point algebra $B^{\delta}$.

\begin{proposition}\label{propsimple}
If $B$ is $G$-simple and $\tilde{\Gamma}(\delta)=\widehat{G}$, then $B^{\delta}$ is simple.
\end{proposition}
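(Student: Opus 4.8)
The plan is to prove this by contradiction, closely mirroring the structure of the proof of Proposition \ref{prop} (the primeness analogue), but replacing "essential ideal" arguments with "equals the whole algebra" arguments, since the strong Connes spectrum demands $\overline{S_{\overline{\alpha},\iota}S_{\iota,\overline{\alpha}}} = S_{\overline{\alpha}}$ rather than mere essentiality. Assume $B^{\delta}$ is not simple. Then there is a nontrivial two-sided ideal $J \subset B^{\delta}$, which I may take to be proper and closed; pick a nonzero positive $a \in J$ and a positive $a_1 \in B^{\delta} \setminus J$. The goal is to produce a nonzero $\delta$-invariant ideal of $B$ properly contained in $B$, contradicting $G$-simplicity.

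First I would form the hereditary $\delta$-invariant subalgebra $C = \overline{aBa} \in \mathcal{H}^{\delta}(B)$, as in Proposition \ref{prop}, and record that $C^{\delta}$ is related to $J$ via Corollary \ref{cor}. The key point is that the strong Connes spectrum condition $\tilde{\Gamma}(\delta) = \widehat{G}$ gives, for every $\alpha$, the strong equality $\overline{S^c_{\overline{\alpha},\iota}S^c_{\iota,\overline{\alpha}}} = S^c_{\overline{\alpha}}$ for the crossed product $C \times_\delta G$. I would then run the same regrouping computation as in Proposition \ref{prop}: starting from $a_1 B^{\delta} a = 0$ being violated on some spectral subspace $B^{\delta}_{\alpha_0}$ (since $B = \overline{\mathrm{linspan}}\{B^{\delta}_\alpha\}$ and $a_1 B a \neq 0$ would follow from $G$-simplicity), translate this into relations among the $S_{\alpha,\iota}$ using Remark \ref{rem1}(1),(2).

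The crucial difference from the prime case is how I exploit the spectrum hypothesis. In Proposition \ref{prop}, essentiality was used to conclude that an element of $S^c_{\overline{\alpha_0}}$ annihilating the essential ideal must vanish. Here, because $\overline{S^c_{\overline{\alpha_0},\iota}S^c_{\iota,\overline{\alpha_0}}}$ equals all of $S^c_{\overline{\alpha_0}}$, I can directly write any element of $S^c_{\overline{\alpha_0}}$ — in particular the one encoding $(a \otimes 1)S_{\overline{\alpha_0},\iota}(a_1 \otimes 1)$ — as a norm limit of products from the spanning set, and then use the defining annihilation relation to force it to be zero. By Remark \ref{rem1}(2), $(a \otimes 1)S_{\overline{\alpha_0},\iota}(a_1 \otimes 1) = 0$ means $a_1 B^{\delta}_{\alpha_0} a = 0$, contradicting the choice of $\alpha_0$.

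The main obstacle I anticipate is bookkeeping at the level of the crossed-product subspaces: I must verify that the element I want to kill genuinely lies in $S^c_{\overline{\alpha_0}}$ (so that the strong-spectrum equality applies to it) and that the product $\overline{S^c_{\overline{\alpha_0},\iota}S^c_{\iota,\overline{\alpha_0}}}$ indeed annihilates it from the computation, exactly as the regrouping \eqref{bbb} achieved in Proposition \ref{prop}. A secondary subtlety is ensuring the ideal produced is proper: having shown $a_1 B^{\delta}_{\alpha_0} a = 0$ fails, the contradiction closes the argument, but one should confirm that the $\delta$-invariant ideal $\overline{BJB}$ implicit in the construction (via Corollary \ref{cor}) is proper precisely because $a_1 \notin J$. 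Once the regrouping identity is transcribed with the strong equality in place of essentiality, the remaining steps are routine applications of Remarks \ref{rem1} and Corollary \ref{cor}.
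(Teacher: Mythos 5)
There is a genuine gap here, and it is structural rather than a matter of bookkeeping. Your plan transplants the annihilation-based contradiction of Proposition \ref{prop} into the simplicity setting, but failure of simplicity produces no annihilation relation to feed into that machine. Non-primeness of $B^{\delta}$ yields two nonzero positive elements with $a_{1}B^{\delta}a_{0}=(0)$, and it is exactly this identity that gets translated into $(a_{1}\otimes 1)S_{\iota,\overline{\alpha_{0}}}S_{\overline{\alpha_{0}},\iota}(a_{0}\otimes 1)=(0)$ and then regrouped as in equation \ref{bbb}. Non-simplicity of $B^{\delta}$ yields only a proper nonzero ideal $J$; for your chosen $a\in J$ and $a_{1}\in B^{\delta}\setminus J$ one has $a_{1}B^{\delta}a\subseteq J$, and this is in general nonzero (take $B^{\delta}$ prime but not simple: no such orthogonality can exist at all). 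So the relation $a_{1}B^{\delta}a=(0)$ you would need is simply unavailable, and the ``defining annihilation relation'' you invoke to force $(a\otimes 1)S_{\overline{\alpha_{0}},\iota}(a_{1}\otimes 1)=(0)$ never exists; the argument has no starting point. Your secondary claim --- that the invariant ideal $\overline{BJB}$ is proper because $a_{1}\notin J$ --- is also false: $G$-simplicity forces $\overline{BJB}=B$ for every nonzero $J$, regardless of whether $J$ is proper in $B^{\delta}$. (This is precisely the phenomenon that makes the strong spectrum, rather than essentiality, the correct hypothesis for simplicity.)

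What the statement requires is a containment argument, not an annihilation argument, and that is how the paper proceeds. For an arbitrary nonzero ideal $J\subseteq B^{\delta}$, set $D=\overline{JBJ}\in\mathcal{H}^{\delta}(B)$ (Remark \ref{rem1}(4)) --- built from the ideal itself, not $\overline{aBa}$. The hypothesis $\tilde{\Gamma}(\delta)=\widehat{G}$ applied to $D$ gives $\overline{S^{D}_{\alpha,\iota}S^{D}_{\iota,\alpha}}=S^{D}_{\alpha}$, that is, $\overline{(J\otimes 1)S_{\alpha,\iota}(J\otimes 1)S_{\iota,\alpha}(J\otimes 1)}=\overline{(J\otimes 1)S_{\alpha}(J\otimes 1)}$. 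Multiplying on the left by $S_{\iota,\alpha}$ and on the right by $S_{\alpha,\iota}$, and using $S_{\iota,\alpha}S_{\alpha,\iota}\subseteq S_{\iota}=B^{\delta}\otimes 1$ (Remark \ref{rem1}(1)) together with the ideal property of $J$, the left-hand side lands in $J\otimes 1$; since the right-hand side contains $S_{\iota,\alpha}(J\otimes 1)S_{\alpha,\iota}$, one concludes $S_{\iota,\alpha}(J\otimes 1)S_{\alpha,\iota}\subseteq J\otimes 1$ for every $\alpha$. Now Corollary \ref{cor} applies to $C=\overline{BJB}$, which equals $B$ by $G$-simplicity, so $B^{\delta}\otimes 1=C^{\delta}\otimes 1=\overline{linspan\{S_{\iota,\beta}(J\otimes 1)S_{\beta,\iota}\mid\beta\in\widehat{G}\}}\subseteq J\otimes 1$, i.e. $J=B^{\delta}$. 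Note where the two hypotheses actually enter: the strong spectrum is used on $D=\overline{JBJ}$, and $G$-simplicity is used only at the last step to make Corollary \ref{cor} produce all of $B^{\delta}$; neither is used to manufacture orthogonal ideals, which is what your contradiction scheme would require.
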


\begin{proof}
Let $J\subseteq B^{\delta}$ be a non-zero two sided ideal. We will prove that
$J=B^{\delta}$ and thus $B^{\delta}$ is simple. To this end we will show that
$S_{\iota,\alpha}(J\otimes1) S_{\alpha,\iota}\subseteq J\otimes1$, for any
$\alpha\in\widehat{G}$. The claim will then follow from Corollary \ref{cor}.

Let $D=\overline{JBJ}$ and let $\alpha\in\widehat{G}$. Then $D\in
\mathcal{H}^{\delta}(B)$. Since $\alpha\in\widehat{G}=\tilde{\Gamma}(\delta)$,
we have%

\[
\overline{S^{D}_{\alpha, \iota}S^{D}_{\iota, \alpha}}=S^{D}_{\alpha}%
\]

where $S^{D}_{\alpha,\iota}$ and $S^{D}_{\iota,\alpha}$ are the corresponding
subspaces of $D\times_{\delta}G$.

By the definition of $D$, it obviously follows that $S^{D}_{\alpha,\iota
}=\overline{(J\otimes1)S_{\alpha,\iota}(J\otimes1)}$ and $S^{D}_{\alpha
}=\overline{(J\otimes1)S_{\alpha}(J\otimes1)}$.

Therefore,%

\begin{equation}
\label{ccc}\overline{(J\otimes1)S_{\alpha,\iota}(J\otimes1)S_{\iota,\alpha
}(J\otimes1)}=\overline{(J\otimes1)S_{\alpha}(J\otimes1)}%
\end{equation}

Multiplying equation \ref{ccc} to the left by $S_{\iota,\alpha}$ and to the
right by $S_{\alpha,\iota}$ we get%

\begin{equation}
\label{ddd}\overline{S_{\iota,\alpha}(J\otimes1)S_{\alpha,\iota}%
(J\otimes1)S_{\iota,\alpha}(J\otimes1)S_{\alpha,\iota}}=\overline
{S_{\iota,\alpha}(J\otimes1)S_{\alpha}(J\otimes1)S_{\alpha,\iota}}%
\end{equation}

By Remark \ref{rem1} (1), $S_{\iota,\alpha}S_{\alpha,\iota}\subseteq S_{\iota
}=B^{\delta}\otimes1$ and since $J\subseteq B^{\delta}$, the left-hand side of
equation \ref{ddd} is included in $J\otimes1$. Therefore, the right-hand side
of equation \ref{ddd} is included in $J\otimes1$:%

\begin{equation}
\label{eee}S_{\iota,\alpha}(J\otimes1)S_{\alpha}(J\otimes1)S_{\alpha,\iota
}\subseteq J \otimes1
\end{equation}

Since $S_{\iota,\alpha}(J\otimes1)S_{\alpha,\iota}\subseteq S_{\iota,\alpha
}(J\otimes1)S_{\alpha}(J\otimes1)S_{\alpha,\iota}$, from equation \ref{eee} it follows:%

\begin{equation}
S_{\iota,\alpha}(J\otimes1)S_{\alpha,\iota}\subseteq J\otimes1
\end{equation}
and we are done.
\end{proof}

We can now prove:

\begin{theorem}\label{simple}
The following are equivalent:
\begin{itemize}
\item[(1)] $B\times_{\delta}G$ is simple.
\item[(2)] $B$ is $G$-simple and $\tilde{\Gamma}(\delta)=\widehat{G}$.
\end{itemize}
\end{theorem}

\begin{proof}

Assume first that $B\times_{\delta}G$ is simple. That $B$ is $G$-simple
follows easily since for every non-trivial ideal $J\in\mathcal{H}^{\delta}%
(B)$, $J\times_{\delta}G$ is a non-trivial ideal of $B\times_{\delta}G$.

Let now $\alpha\in\widehat{G}$ and $C\in\mathcal{H}^{\delta}(B)$. Then
$C\times_{\delta}G$ si a hereditary subalgebra of $B\times_{\delta}G$ by
Remark \ref{rem1} (3) and hence it is simple. By \cite{ral} Corollary 4.9,
$S_{\alpha,\iota}\ne0$ and $S_{\alpha}$ is simple. Hence $\overline
{S_{\alpha,\iota}S_{\iota,\alpha}}=S_{\alpha}$ and $\alpha\in\tilde{\Gamma
}(\delta)$.

Conversely, assume that $B$ is $G$-simple and $\tilde{\Gamma}(\delta
)=\widehat{G}$. By Proposition \ref{propsimple}, $B^{\delta}$ is simple. Now,
for every $\alpha\in\widehat{G}=\tilde{\Gamma}(\delta)$, the non-zero ideal
$\overline{S_{\alpha,\iota}S_{\iota,\alpha}}\subseteq S_{\iota}=B^{\delta
}\otimes1$ is simple and so is the Morita equivalent algebra $\overline
{S_{\alpha,\iota}S_{\iota,\alpha}}=S_{\alpha}$. The conclusion follows now
from \cite{ral}, Corollary 4.9.
\end{proof}

\section{ Spectra are closed under tensor products}

In order to prove the results about the stability of the Connes spectrum and
the strong Connes spectrum to tensor products, we need to make some notations.
If $\alpha\in\widehat{G}$ and $\beta\in\widehat{G}$ and $u^{\alpha}\in\alpha$,
$u^{\beta}\in\beta$ denote by $u^{\alpha}\odot u^{\beta}=\sum_{p,q,r,s}%
m_{pq}^{\alpha}\otimes m_{rs}^{\beta}\otimes u_{pq}^{\alpha}u_{rs}^{\beta}$
the Kronecker tensor product of $u^{\alpha} $ and $u^{\beta}$, which is a
representation of $A$ \cite{wor2}. Then $u^{\alpha}\odot u^{\beta}$ is unitary
if both $u^{\alpha}$and $u^{\beta}$ are unitary. Moreover, $u^{\alpha}\odot
u^{\beta}$ is equivalent to a direct sum of irreducible representations,
$u^{\alpha}\odot u^{\beta}\widetilde{=}\sum_{i}^{\oplus}u^{\rho_{i}},\rho
_{i}\in\widehat{G}$. The equivalence and $\rho_{i}\in\widehat{G}$ are unitary
if both $u^{\alpha}$and $u^{\beta}$ are unitary \cite{wor2}.

\begin{definition}
Let $\Pi\subset\widehat{G}$ be a subset. We say that $\Pi$ is closed under tensor products if for every $\alpha\in\Pi,\beta\in\Pi$ and $u^{\alpha}\in\alpha$, $u^{\beta}\in\beta$ it follows that every irreducible component of $u^{\alpha}\odot u^{\beta}$ belongs to $\Pi$.
\end{definition}

If $X\in B_{2}^{\delta}(u^{\alpha})$ and $Y\in B_{2}^{\delta}(u^{\beta})$ we
denote $X\odot Y=\sum_{l,k,i,j}X_{lk}Y_{ij}\otimes m_{lk}^{\alpha}\otimes
m_{ij}^{\beta}$ (for the case of groups this notation was used in
\cite{peligrad}). Standard calculations show that $X\odot Y\in B_{2}^{\delta
}(u^{\alpha}\odot u^{\beta})$. Furthermore, $X\odot Y$ can be viewed as the
matrix of order $d_{\alpha}d_{\beta}\times d_{\alpha}d_{\beta}$ partitioned in
$d_{\beta}^{2}$ blocks of order $d_{\alpha}\times d_{\alpha}$ as follows:
$X\odot Y=[Xdiag(Y_{ij})]$, where $diag(Y_{ij})$ is the $d_{\alpha}\times
d_{\alpha}$ matrix with all the diagonal entries equal to $Y_{ij}$ and all the
others equal to $0$.

\begin{remark}\label{odot}
If $u^{\alpha}\odot u^{\beta}\widetilde{=}\sum_{i}^{\oplus}u^{\rho_{i}},\rho_{i}\in\widehat{G}$, then:
(1)$(B\otimes\mathcal{M}_{d_{\alpha}d_{\beta}})^{\delta_{u^{\alpha}\odot u^{\beta}}}$ is spatially isomorphic to $\sum^{\oplus}(B\otimes \mathcal{M}_{d_{\rho_{i}}})^{\delta_{\rho_{i}}}$ \ ($\ast$-isomorphic if both $u^{\alpha}$and $u^{\beta}$ are unitary) and
(2)$B_{2}^{\delta}(u^{\alpha}\odot u^{\beta})$ is spatially isomorphic to $\sum^{\oplus}B_{2}^{\delta}(\rho_{i})$.
\end{remark}

The proof of the above remark follows immediately using a change of basis in
$\mathcal{M}_{d_{\alpha}d_{\beta}}$ .

We prove first

\begin{lemma}
$\tilde{Sp}(\delta|_{C})$ is closed under tensor products for every $C\in\mathcal{H}^{\delta}(B)$.
\end{lemma}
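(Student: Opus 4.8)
The plan is to reduce the statement about $\tilde{Sp}(\delta|_C)$ being closed under tensor products to the spatial decomposition already recorded in Remark~\ref{odot}, together with the characterization of the strong Arveson spectrum in terms of spectral subspaces. Fix $C\in\mathcal{H}^{\delta}(B)$ and suppose $\alpha,\beta\in\tilde{Sp}(\delta|_C)$ with representatives $u^{\alpha},u^{\beta}$. By the lemma characterizing $\tilde{Sp}$ (applied to the restricted coaction $\delta|_C$), this means
\[
\overline{C_{2}^{\delta}(u^{\alpha})^{\ast}C_{2}^{\delta}(u^{\alpha})}=(C\otimes\mathcal{M}_{d_{\alpha}})^{\delta_{u^{\alpha}}}
\]
and similarly for $\beta$. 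I want to show that every irreducible component $\rho_i$ of $u^{\alpha}\odot u^{\beta}$ again lies in $\tilde{Sp}(\delta|_C)$, i.e.\ that $\overline{C_{2}^{\delta}(\rho_i)^{\ast}C_{2}^{\delta}(\rho_i)}=(C\otimes\mathcal{M}_{d_{\rho_i}})^{\delta_{\rho_i}}$ for each $i$.

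First I would pass to the Kronecker product $u^{\alpha}\odot u^{\beta}$ and establish the product-level identity
\[
\overline{C_{2}^{\delta}(u^{\alpha}\odot u^{\beta})^{\ast}C_{2}^{\delta}(u^{\alpha}\odot u^{\beta})}=(C\otimes\mathcal{M}_{d_{\alpha}d_{\beta}})^{\delta_{u^{\alpha}\odot u^{\beta}}}.
\]
The inclusion $\subseteq$ is automatic from $C_{2}^{\delta}(u^{\alpha}\odot u^{\beta})\subseteq C\otimes\mathcal{M}_{d_{\alpha}d_{\beta}}$ together with the invariance property defining the fixed-point algebra. For the reverse inclusion I would use the block-matrix description $X\odot Y=[Xdiag(Y_{ij})]$ introduced just before Remark~\ref{odot}: products of the form $(X\odot Y)^{\ast}(X'\odot Y')$ expand into sums of terms $diag(Y_{ij})^{\ast}X^{\ast}X'\,diag(Y'_{kl})$, and using the two hypotheses I can approximate any element $a\otimes m$ of the fixed-point algebra first by sums of $X^{\ast}X'$ in the $\alpha$-direction and then absorb the $\beta$-direction identity $\overline{C_{2}^{\delta}(u^{\beta})^{\ast}C_{2}^{\delta}(u^{\beta})}=(C\otimes\mathcal{M}_{d_{\beta}})^{\delta_{u^{\beta}}}$. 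Concretely, since both $\overline{C_{2}^{\delta}(u^{\alpha})^{\ast}C_{2}^{\delta}(u^{\alpha})}$ and $\overline{C_{2}^{\delta}(u^{\beta})^{\ast}C_{2}^{\delta}(u^{\beta})}$ exhaust their respective fixed-point algebras, the bilinear span of their ``$\odot$-products'' is dense in the fixed-point algebra of the tensor coaction.

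Finally I would apply Remark~\ref{odot} to transport this identity through the spatial isomorphism. Since
\[
(C\otimes\mathcal{M}_{d_{\alpha}d_{\beta}})^{\delta_{u^{\alpha}\odot u^{\beta}}}\cong\sum{}^{\oplus}(C\otimes\mathcal{M}_{d_{\rho_i}})^{\delta_{\rho_i}}
\]
and, compatibly, $C_{2}^{\delta}(u^{\alpha}\odot u^{\beta})\cong\sum^{\oplus}C_{2}^{\delta}(\rho_i)$, the product-level equality decomposes as a direct sum of the component equalities $\overline{C_{2}^{\delta}(\rho_i)^{\ast}C_{2}^{\delta}(\rho_i)}=(C\otimes\mathcal{M}_{d_{\rho_i}})^{\delta_{\rho_i}}$. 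Each summand equality, by the lemma characterizing $\tilde{Sp}$, says exactly $\rho_i\in\tilde{Sp}(\delta|_C)$, which is what we wanted. I expect the main obstacle to be the reverse inclusion in the product-level identity: one must verify carefully that the block structure of $X\odot Y$ lets the two separate surjectivity hypotheses combine to fill out the full fixed-point algebra $(C\otimes\mathcal{M}_{d_{\alpha}d_{\beta}})^{\delta_{u^{\alpha}\odot u^{\beta}}}$, since a priori the products $(X\odot Y)^{\ast}(X'\odot Y')$ only span a subspace and one needs a density/approximation argument rather than an exact algebraic identity.
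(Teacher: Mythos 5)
Your skeleton (prove the product-level density $\overline{C_{2}^{\delta}(u^{\alpha}\odot u^{\beta})^{\ast}C_{2}^{\delta}(u^{\alpha}\odot u^{\beta})}=(C\otimes\mathcal{M}_{d_{\alpha}d_{\beta}})^{\delta_{u^{\alpha}\odot u^{\beta}}}$, then decompose via Remark \ref{odot}) is the same as the paper's, and the final decomposition step is fine. But the crux of the proof is exactly the point you flag at the end and then leave unresolved: the assertion that ``the bilinear span of the $\odot$-products is dense in the fixed-point algebra of the tensor coaction'' is the statement to be proved, and the mechanism you sketch does not produce it. The hypothesis $\alpha\in\tilde{Sp}(\delta|_{C})$ makes sums of products $X^{\ast}X'$ dense only in the fixed-point algebra $(C\otimes\mathcal{M}_{d_{\alpha}})^{\delta_{u^{\alpha}}}$, not in $C\otimes\mathcal{M}_{d_{\alpha}}$. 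Yet in the block expansion $(X\odot Y)^{\ast}(X'\odot Y')$, whose $(i,j)$ block is $\sum_{k}diag(Y_{ki})^{\ast}\,X^{\ast}X'\,diag(Y'_{kj})$, the factor $X^{\ast}X'$ is sandwiched between matrices with \emph{arbitrary} entries from $C$; and the element $Z$ of the joint fixed-point algebra that you want to approximate likewise has blocks that are arbitrary elements of $C\otimes\mathcal{M}_{d_{\alpha}}$, constrained only by the joint covariance relation. So ``approximate $a\otimes m$ first by sums of $X^{\ast}X'$ in the $\alpha$-direction'' is not available: those sums only reach $\delta_{u^{\alpha}}$-fixed elements, and the joint fixed-point algebra is in general much larger than anything built algebraically from the two separate fixed-point algebras.

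What closes this gap in the paper are two specific external inputs. First, by \cite{brown}, Theorem 2.1, $(C\otimes\mathcal{M}_{d_{\alpha}})^{\delta_{u^{\alpha}}}$ has an approximate identity of the special form $E_{\lambda}=\sum_{i}(X_{i}^{\lambda})^{\ast}X_{i}^{\lambda}$ with $X_{i}^{\lambda}\in C_{2}^{\delta}(u^{\alpha})$. Second, and decisively, by \cite{ralpel}, Lemma 2.7, such an approximate identity of the fixed-point algebra is automatically an approximate identity of the \emph{whole} algebra $C\otimes\mathcal{M}_{d_{\alpha}}$. This is precisely what lets the $\alpha$-direction act against arbitrary $C$-entries: it gives the norm convergence
\[
\sum_{i}\bigl(X_{i}^{\lambda}\odot Y_{1}\bigr)^{\ast}\bigl(X_{i}^{\lambda}\odot Y_{2}\bigr)\longrightarrow I_{d_{\alpha}}\odot\bigl(Y_{1}^{\ast}Y_{2}\bigr),
\]
since each block $\sum_{k}diag(Y_{1,ki})^{\ast}E_{\lambda}\,diag(Y_{2,kj})$ converges to $diag((Y_{1}^{\ast}Y_{2})_{ij})$. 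Consequently the closed ideal $\overline{C_{2}^{\delta}(u^{\alpha}\odot u^{\beta})^{\ast}C_{2}^{\delta}(u^{\alpha}\odot u^{\beta})}$ of the joint fixed-point algebra contains $I_{d_{\alpha}}\odot F_{\gamma}$, where $F_{\gamma}=\sum_{i}(Y_{i}^{\gamma})^{\ast}Y_{i}^{\gamma}$ is (again by \cite{brown} and \cite{ralpel}) an approximate identity of all of $C\otimes\mathcal{M}_{d_{\beta}}$; then $I_{d_{\alpha}}\odot F_{\gamma}$ is an approximate identity of $C\otimes\mathcal{M}_{d_{\alpha}d_{\beta}}$, and an ideal of the fixed-point algebra containing such an approximate identity must be dense. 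Note also that this last step requires working with the ideal $C_{2}^{\delta}(u^{\alpha}\odot u^{\beta})^{\ast}C_{2}^{\delta}(u^{\alpha}\odot u^{\beta})$ (using $C_{2}^{\delta}\cdot(\text{fixed elements})\subseteq C_{2}^{\delta}$) rather than the bare bilinear span of $\odot$-products, whose density is never established as such. Without these ingredients your outline stalls exactly at the obstacle you identified; with them, it becomes the paper's proof.
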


\begin{proof}
We have to prove that if $\alpha,\beta\in\tilde{Sp}(\delta|C),C\in
\mathcal{H}^{\delta}(B)$, then every irreducible component of $u^{\alpha}\odot
u^{\beta}$ belongs to $\tilde{Sp}(\delta|C)$.

It is enough to prove the above claim for $C=B$. We first show that if
$\alpha\in\tilde{Sp}(\delta)$ and $\beta\in\tilde{Sp}(\delta)$, then
$B_{2}^{\delta}(u^{\alpha}\odot u^{\beta})^{\ast}B_{2}^{\delta}(u^{\alpha
}\odot u^{\beta})$ is a dense ideal of $(B\otimes\mathcal{M}_{d_{\alpha
}d_{\beta}})^{\delta_{u^{\alpha}\odot u^{\beta}}}$.

Indeed, by (\cite{brown}, Theorem 2.1), $(B\otimes\mathcal{M}_{d_{\alpha}%
})^{\delta_{\alpha}}$ has an approximate identity $\{E_{\lambda}\}$ of the
form $E_{\lambda}=\sum_{1}^{n_{\lambda}}(X_{i}^{\lambda})^{\ast}X_{i}%
^{\lambda}$, $X_{i}^{\lambda}\in B_{2}^{\delta}(u^{\alpha}),i=1,2...n_{\lambda
}$. By (\cite{ralpel}, Lemma 2.7) $\{E_{\lambda}\}$ is an approximate identity
of $B\otimes\mathcal{M}_{d_{\alpha}}$. Hence $(Y_{1}\odot I_{d_{\alpha}%
})^{\ast}(Y_{2}\odot I_{d_{\alpha}})\in B_{2}^{\delta}(u^{\alpha}\odot
u^{\beta})$, for all $Y_{1},Y_{2}\in B_{2}^{\delta}(u^{\beta})$. Since
$\beta\in\tilde{Sp}(\delta)$, $B_{2}^{\delta}(u^{\beta})^{\ast}B_{2}^{\delta
}(u^{\beta})$ is a dense ideal of $(B\otimes\mathcal{M}_{d_{\beta}}%
)^{\delta_{\beta}}$. Using an approximate identity of $(B\otimes
\mathcal{M}_{d_{\beta}})^{\delta_{\beta}}$ of the form $F_{\gamma}=\sum
_{1}^{m_{\gamma}}(Y_{i}^{\gamma})^{\ast}Y_{i}^{\gamma},Y_{i}^{\gamma}\in
B_{2}^{\delta}(u^{\beta})$, by the pattern we used above, it follows that
$B_{2}^{\delta}(u^{\alpha}\odot u^{\beta})^{\ast}B_{2}^{\delta}(u^{\alpha
}\odot u^{\beta})$ is a dense ideal of $(B\otimes\mathcal{M}_{d_{\alpha
}d_{\beta}})^{\delta_{u^{\alpha}\odot u^{\beta}}}$.

On the other hand, since $u^{\alpha}\odot u^{\beta}$ is equivalent to a direct
sum of irreducible representations, $u^{\alpha}\odot u^{\beta}\widetilde
{=}\sum_{i}^{\oplus}u^{\rho_{i}},\rho_{i}\in\widehat{G},$ by Remark
\ref{odot}(1), $(B\otimes\mathcal{M}_{d_{\alpha}d_{\beta}})^{\delta
_{u^{\alpha}\odot u^{\beta}}}$ is spatially $\ast$-isomorphic to $\sum
^{\oplus}(B\otimes\mathcal{M}_{d_{\rho_{i}}})^{\delta_{\rho_{i}}}$. Thus,
since by Remark \ref{odot}(2), $B_{2}^{\delta}(u^{\alpha}\odot u^{\beta})$ is
spatially isomorphic to $\sum^{\oplus}B_{2}^{\delta}(\rho_{i})$, it follows
that $B_{2}^{\delta}(\rho_{i})^{\ast}B_{2}^{\delta}(\rho_{i})$ is dense in
$(B\otimes\mathcal{M}_{d_{\rho_{i}}})^{\delta_{\rho_{i}}}$ for all $i$.
Therefore $\rho_{i}\in\tilde{Sp}(\delta)$ for every $i$. Thus $\tilde
{Sp}(\delta|_{C})$ is closed under tensor products for every $C\in
\mathcal{H}^{\delta}(B)$.
\end{proof}

Therefore:

\begin{proposition}\label{strongconnesclosed}
$\tilde{\Gamma}(\delta)$ is closed under tensor products.
\begin{proof}
Obvious, since $\tilde{\Gamma}(\delta)=\underset{C\in\mathcal{H}^{\alpha}(B)}{\cap}\tilde{Sp}(\delta|_{C}).$
\end{proof}
\end{proposition}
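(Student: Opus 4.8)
The plan is to read the statement off immediately from the preceding Lemma together with the defining formula $\tilde{\Gamma}(\delta)=\underset{C\in\mathcal{H}^{\delta}(B)}{\cap}\tilde{Sp}(\delta|_{C})$ of Definition \ref{strongspectra}. The only thing that genuinely needs checking is the elementary fact that an intersection of subsets of $\widehat{G}$ each of which is closed under tensor products is again closed under tensor products.

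Concretely, I would proceed as follows. Take $\alpha,\beta\in\tilde{\Gamma}(\delta)$ and fix representatives $u^{\alpha}\in\alpha$, $u^{\beta}\in\beta$; let $\rho$ denote an arbitrary irreducible component of $u^{\alpha}\odot u^{\beta}$. For each fixed $C\in\mathcal{H}^{\delta}(B)$, the definition of the intersection gives $\alpha,\beta\in\tilde{Sp}(\delta|_{C})$, and the preceding Lemma---which asserts that $\tilde{Sp}(\delta|_{C})$ is closed under tensor products---then forces $\rho\in\tilde{Sp}(\delta|_{C})$. Since $C$ was arbitrary, $\rho$ lies in every $\tilde{Sp}(\delta|_{C})$, hence in their intersection $\tilde{\Gamma}(\delta)$. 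As $\rho$ ranged over all irreducible components of $u^{\alpha}\odot u^{\beta}$, this is precisely the assertion that $\tilde{\Gamma}(\delta)$ is closed under tensor products.

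I do not expect any genuine obstacle at this stage. All of the analytic content has already been absorbed into the proof of the Lemma: the construction of an approximate identity of $(B\otimes\mathcal{M}_{d_{\alpha}})^{\delta_{\alpha}}$ built from elements of $B_{2}^{\delta}(u^{\alpha})$, together with the spatial decomposition of $u^{\alpha}\odot u^{\beta}$ into irreducibles supplied by Remark \ref{odot}. What remains for the Proposition is purely the set-theoretic passage from the individual spectra $\tilde{Sp}(\delta|_{C})$ to their intersection, so the argument reduces to a single invocation of the Lemma and Definition \ref{strongspectra}.
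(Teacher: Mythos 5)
Your proof is correct and is essentially the paper's own argument: the paper disposes of the Proposition as ``obvious'' from the formula $\tilde{\Gamma}(\delta)=\underset{C\in\mathcal{H}^{\delta}(B)}{\cap}\tilde{Sp}(\delta|_{C})$ together with the preceding Lemma, and your write-up simply makes explicit the set-theoretic fact that an intersection of tensor-closed subsets of $\widehat{G}$ is tensor-closed. No gap; you have merely spelled out what the paper leaves implicit.
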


We will prove next that the Connes spectrum is closed under tensor products.
As in the case of the strong Connes spectrum, we will show first that our
Arveson spectrum $Sp(\delta|_{C})$, is closed under tensor products for every
$C\in\mathcal{H}^{\delta}(B)$.

Let $\alpha\in\widehat{G}$ and $\beta\in\widehat{G}$ and $u^{\alpha}\in\alpha
$, $u^{\beta}\in\beta$ . If $u^{\alpha}$ and $u^{\beta}$ are unitary, then, as
noticed above, $u^{\alpha}\odot u^{\beta}$ is a unitary representation. If
$u_{1}^{\alpha}$ is a representation in the class $\alpha$, not necessarily
unitary, then there exists an invertible matrix $S\in\mathcal{M}_{d_{\alpha}}$
such that $u_{1}^{\alpha}=(S^{-1}\otimes1)u^{\alpha}(S\otimes1)$. Notice that
$\ B_{2}^{\delta}(u_{1}^{\alpha})=\{(1_{B}\otimes S^{-1})X(1_{B}\otimes
S)|X\in B_{2}^{\delta}(u^{\alpha})\}$.

\begin{lemma}
$Sp(\delta|_{C})$ is closed under tensor products for every $C\in\mathcal{H}^{\delta}(B)$.
\end{lemma}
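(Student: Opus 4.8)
The plan is to mirror the structure of the proof that $\tilde{Sp}(\delta|_C)$ is closed under tensor products, adapting it from the ``essential ideal'' condition to the weaker ``dense ideal'' condition that characterizes the Arveson spectrum via Lemma~\ref{essential}. As before it suffices to treat $C=B$, and the heart of the matter is to show: if $\alpha,\beta\in Sp(\delta)$, then $\overline{B_2^\delta(u^\alpha\odot u^\beta)^\ast B_2^\delta(u^\alpha\odot u^\beta)}$ is an \emph{essential} ideal of $(B\otimes\mathcal{M}_{d_\alpha d_\beta})^{\delta_{u^\alpha\odot u^\beta}}$. Once this is established, Remark~\ref{odot} lets me decompose $u^\alpha\odot u^\beta\widetilde{=}\sum_i^\oplus u^{\rho_i}$ and transport the essentiality through the spatial isomorphism: if an ideal is essential in a direct sum $\sum^\oplus(B\otimes\mathcal{M}_{d_{\rho_i}})^{\delta_{\rho_i}}$ and the ideal itself decomposes as $\sum^\oplus B_2^\delta(\rho_i)^\ast B_2^\delta(\rho_i)$, then each summand must be essential in the corresponding $(B\otimes\mathcal{M}_{d_{\rho_i}})^{\delta_{\rho_i}}$, whence $\rho_i\in Sp(\delta)$ by Lemma~\ref{essential}.

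First I would reduce to unitary representatives. By the remark preceding this lemma, replacing $u_1^\alpha$ by a unitary $u^\alpha$ in its class conjugates $B_2^\delta(u_1^\alpha)$ by $(1_B\otimes S)$, and the Kronecker product behaves accordingly, so the spaces $B_2^\delta(u^\alpha\odot u^\beta)$ for different representatives are spatially isomorphic and the essentiality condition is representative-independent. Thus I may assume $u^\alpha$ and $u^\beta$ are unitary, so that $u^\alpha\odot u^\beta$ is unitary and decomposes into unitary irreducibles with a unitary equivalence, making all the isomorphisms in Remark~\ref{odot} genuine $\ast$-isomorphisms.

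The core computation is the essentiality step, and this is where I expect the real work to lie. In the strong-spectrum lemma one had the luxury of a factorization $(Y_1\odot I_{d_\alpha})^\ast(Y_2\odot I_{d_\alpha})\in B_2^\delta(u^\alpha\odot u^\beta)$ coming from an approximate identity $E_\lambda=\sum(X_i^\lambda)^\ast X_i^\lambda$ of the \emph{whole} algebra $(B\otimes\mathcal{M}_{d_\alpha})^{\delta_\alpha}$, which forced \emph{density}. Here I only know that $\overline{B_2^\delta(u^\alpha)^\ast B_2^\delta(u^\alpha)}$ and $\overline{B_2^\delta(u^\beta)^\ast B_2^\delta(u^\beta)}$ are \emph{essential} ideals of their respective fixed-point algebras, so I cannot produce a global approximate identity built from spectral elements. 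Instead the plan is: suppose $Z\in(B\otimes\mathcal{M}_{d_\alpha d_\beta})^{\delta_{u^\alpha\odot u^\beta}}$ is a nonzero positive element annihilating $\overline{B_2^\delta(u^\alpha\odot u^\beta)^\ast B_2^\delta(u^\alpha\odot u^\beta)}$; using the block structure $X\odot Y=[X\,diag(Y_{ij})]$ I would express the products $(X\odot Y)^\ast(X'\odot Y')$ explicitly, then exploit first the essentiality of $B_2^\delta(u^\beta)^\ast B_2^\delta(u^\beta)$ in the ``$\beta$-slot'' and then the essentiality of $B_2^\delta(u^\alpha)^\ast B_2^\delta(u^\alpha)$ in the ``$\alpha$-slot'' to derive $Z=0$. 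The delicate point is that essentiality is not preserved under tensoring as cleanly as density is, so I must argue successively one factor at a time, checking that the hereditary/fixed-point structure of the tensor fixed-point algebra restricts correctly to each slot; concretely I would use that $B_2^\delta(u^\alpha\odot u^\beta)$ is a bimodule over both $(B\otimes\mathcal{M}_{d_\alpha})^{\delta_\alpha}$ and the $\beta$-algebra so that annihilation in the product propagates down. Having obtained essentiality for $u^\alpha\odot u^\beta$, the decomposition argument via Remark~\ref{odot} described above completes the proof and yields $\rho_i\in Sp(\delta)$ for every irreducible component, hence closure of $Sp(\delta|_C)$ under tensor products for every $C\in\mathcal{H}^\delta(B)$.
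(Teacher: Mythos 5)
Your overall skeleton matches the paper's: reduce to $C=B$, show that the products of elements of $B_{2}^{\delta}(u^{\alpha}\odot u^{\beta})$ generate an essential ideal of $(B\otimes\mathcal{M}_{d_{\alpha}d_{\beta}})^{\delta_{u^{\alpha}\odot u^{\beta}}}$, then decompose $u^{\alpha}\odot u^{\beta}$ into irreducibles via Remark \ref{odot} and transport essentiality through the direct sum. But the entire content of the lemma lies in the essentiality step, and there your text is a plan, not an argument, and the plan as stated has a genuine gap. Two things must actually be proved. First, when you compress $Z=\sum Z_{ij}\otimes m_{ij}^{\beta}$ by a single-row $Y\in B_{2}^{\delta}(u^{\beta})$ with row $(y_{1},\dots,y_{d_{\beta}})$, you must show that $\sum_{i,j}y_{i}Z_{ij}y_{j}^{\ast}$ lies in $(B\otimes\mathcal{M}_{d_{\alpha}})^{\delta_{u^{\alpha}}}$ (the paper's Formula \ref{formula1}); only then does $\alpha\in Sp(\delta)$ force it to vanish. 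Second, and this is the crux, you must convert the resulting family of identities into a statement about a single element of $(B\otimes\mathcal{M}_{d_{\beta}})^{\delta_{u^{\beta}}}$ killed by all of $B_{2}^{\delta}(u^{\beta})$. The paper does this by a partial trace over the $\alpha$-indices, and the quantum subtlety is that the naive partial trace of $Z_{ij}$ does \emph{not} land in the $\beta$-fixed-point algebra: one must first pass to a (generally non-unitary) representative $u_{1}^{\alpha}=(S^{-1}\otimes 1)u^{\alpha}(S\otimes 1)$ whose conjugate $\overline{u_{1}^{\alpha}}$ is unitary, conjugate $Z$ to $V_{ij}=(1_{B}\otimes S^{\ast})Z_{ij}(1_{B}\otimes S)$, and only then trace, so that the identity $\sum_{l}(u_{1}^{\alpha})_{pl}^{\ast}(u_{1}^{\alpha})_{ql}=\delta_{pq}1_{A}$ makes $D=\sum_{i,j}(\sum_{p}V_{ij}^{pp})\otimes m_{ij}^{\beta}$ an element of $(B\otimes\mathcal{M}_{d_{\beta}})^{\delta_{u^{\beta}}}$. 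This device, which exists precisely because conjugates of unitary representations of a compact quantum group need not be unitary (the $F_{\alpha}$ phenomenon), is what makes the lemma nontrivial, and it is absent from your proposal.

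The substitute you offer, namely that $B_{2}^{\delta}(u^{\alpha}\odot u^{\beta})$ is a bimodule over $(B\otimes\mathcal{M}_{d_{\alpha}})^{\delta_{u^{\alpha}}}$ and over the $\beta$-algebra so that "annihilation propagates down", is unjustified and is doubtful in the quantum setting: checking such a module property requires commuting a copy of $u^{\alpha}$ past a copy of $u^{\beta}$ in the leg of $A$, and the matrix entries $u^{\alpha}_{pq}$, $u^{\beta}_{rs}$ do not commute in $A$ when $G$ is a genuine quantum group (indeed $u^{\alpha}\odot u^{\beta}$ and $u^{\beta}\odot u^{\alpha}$ need not even be equivalent, which also makes your proposed order of slots, $\beta$ first and then $\alpha$, incompatible with the asymmetric block structure $X\odot Y=[X\,diag(Y_{ij})]$; the compression by single-row $Y$'s produces $\alpha$-slot elements first). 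Finally, the passage from the vanishing statements back to $Z=0$ needs the positivity bookkeeping the paper carries out ($Z\geq 0$ gives $V\geq 0$ and $D\geq 0$, $YDY^{\ast}=0$ for spanning $Y$'s plus $D\geq 0$ gives $YD=0$, then $D=0$ kills the diagonal of $V$, and positivity of $V$ gives $V=0$, hence $Z=0$); your outline stops short of this chain. In short, the frame is right, but the two fixed-point-membership claims and the conjugate-unitary representative trick, which are the actual mathematical content, are missing.
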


\begin{proof}
We may assume that $C=B$. Let $\alpha,\beta\in Sp(\delta)$ and $u^{\alpha}%
\in\alpha,u^{\beta}\in\beta$ be unitary representatives of $\alpha$ and
$\beta$. We will first show that%

\[
linspan\{(X\odot Y)^{\ast}(X\odot Y)|X\in B_{2}^{\delta}(u^{\alpha}),Y\in
B_{2}^{\delta}(u^{\beta})\}
\]
is an essential ideal of $(B\otimes\mathcal{M}_{d_{\alpha}d_{\beta}}%
)^{\delta_{u^{\alpha}\odot u^{\beta}}}$. It then follows immediately that each
irreducible component of $u^{\alpha}\odot u^{\beta}$ belongs to $Sp(\delta)$.

Let $Z\in(B\otimes\mathcal{M}_{d_{\alpha}d_{\beta}})^{\delta_{u^{\alpha}\odot
u^{\beta}}},Z\geq0$. Assume that $(X\odot Y)Z=0$, for every $X\in
B_{2}^{\delta}(u^{\alpha}),Y\in B_{2}^{\delta}(u^{\beta})$. Let $Z$ be
partitioned in blocks as follows: $Z=\sum_{l,k=1}^{d_{\beta}}Z_{lk}\otimes
m_{lk}^{\beta}$, where $Z_{lk}$ are $d_{\alpha}\times d_{\alpha}$ matrices
with entries in $B$. Since $(X\odot Y)Z=0$, for every $X\in B_{2}^{\delta
}(u^{\alpha}), Y\in B_{2}^{\delta}(u^{\beta})$, it follows that $(X\odot
Y)Z(I_{d\alpha}\odot Y^{\ast})=0$, for every such $X,Y$. In particular, if $Y$
is as in Remark \ref{rem2.1} (4), that is $Y$ has only one nonzero row
consisting of $y_{1},y_{2},...y_{d_{\beta}}$, we have%

\[
X\sum_{i,j=1}^{d_{\beta}}y_{i}Z_{ij}y_{j}^{\ast}=0,
\]
for every $X\in B_{2}^{\delta}(u^{\alpha})$ and $Y\in B_{2}^{\delta}(u^{\beta
})$ as chosen, where the multiplication $y_{i}Z_{ij}y_{j}^{\ast}$ is the
multiplication in $B$ of $y_{i},y_{j}^{\ast}$ with each entry of $Z_{ij}$. We
prove first the following:%

\begin{equation}
\label{formula1}\sum y_{i}Z_{ij}y_{j}^{\ast}\in(B\otimes\mathcal{M}%
_{d_{\alpha}})^{\delta_{u^{\alpha}}},
\end{equation}
for every $Y\in B_{2}^{\delta}(u^{\beta})$ as chosen (i.e. with only one
nonzero row).

In the following leg numbering notation, there are four places in the
following order: $B,\mathcal{M}_{d_{\alpha}},\mathcal{M}_{d_{\beta}},A$.

Since $Z\in(B\otimes\mathcal{M}_{d_{\alpha}d_{\beta}})^{\delta_{u^{\alpha
}\odot u^{\beta}}}$, we have:%

\begin{equation}
\label{formula2}\delta_{14}(Z)=(1_{B}\otimes(u^{\alpha}\odot u^{\beta})^{\ast
})(Z\otimes1_{A})(1_{B}\otimes(u^{\alpha}\odot u^{\beta})).
\end{equation}

By the definition of $u^{\alpha}\odot u^{\beta}$, we have:%

\begin{equation}%
\begin{split}
\delta_{14}(\sum Z_{ij}\otimes m_{ij}^{\beta}) &  =(\sum1_{B}\otimes
m_{pq}^{\alpha}\otimes m_{rs}^{\beta}\otimes u_{sr}^{\beta\ast}u_{qp}%
^{\alpha\ast})(\sum Z_{ij}\otimes m_{ij}^{\beta}\otimes1_{A})\times\\
&  (\sum1_{B}\otimes m_{tu}^{\alpha}\otimes m_{vw}^{\beta}\otimes
u_{pq}^{\alpha}u_{rs}^{\beta})
\end{split}
\label{formula3}%
\end{equation}

On the other hand, taking into account that $Y\in(B\otimes\mathcal{M}%
_{d_{\beta}})^{\delta_{u^{\beta}}}$, it follows that:%

\begin{equation}
\label{formula4}\delta_{14}(Y_{13})=(1_{B}\otimes(u^{\beta})_{34}^{\ast
})(Y_{13}\otimes1_{A})(1_{B}\otimes(u^{\beta})_{34}),
\end{equation}
where $(u^{\beta})_{34}=\sum1_{B}\otimes I_{d_{\alpha}}\otimes m_{lk}^{\beta
}\otimes u_{lk}^{\beta}$.

By combining Formulas \ref{formula2} and \ref{formula4} and taking into
account that $u^{\alpha}$ and $u^{\beta}$ are unitary, we get Formula
\ref{formula1}. Therefore, since $\alpha\in Sp(\delta)$ it follows that $\sum
y_{i}Z_{ij}y_{j}^{\ast}=0$ for every such $Y$.

Let $u_{1}^{\alpha}\in\alpha$ be a not necessarily unitary representation, but
such that $\overline{u_{1}^{\alpha}}$ is unitary. Then, since $u^{\alpha}$ and
$u_{1}^{\alpha}$ are equivalent, there is an invertible matrix $S\in
\mathcal{M}_{d_{\alpha}}$ such that $u_{1}^{\alpha}=(S^{-1}\otimes1)u^{\alpha
}(S\otimes1)$. Notice that $B_{2}^{\delta}(u_{1}^{\alpha})=\{(1_{B}\otimes
S^{-1})X(1_{B}\otimes S)|X\in B_{2}^{\delta}(u^{\alpha})\}$. Denote
$V_{ij}=(1_{B}\otimes S^{\ast})Z_{ij}(1_{B}\otimes S)$ for all
$i,j=1,2...d_{\beta}$. Thus, since $\sum y_{i}Z_{ij}y_{j}^{\ast}=0$, it
immediately follows that%

\[
\sum y_{i}V_{ij}y_{j}^{\ast}=0,
\]
for every $Y$ as chosen.

In particular, $\sum y_{i}V_{ij}^{pq}y_{j}^{\ast}=0$, for all
$p,q=1,2...d_{\alpha}$, where $V_{ij}^{pq}$ is the entry $pq$ of the
$d_{\alpha}\times d_{\alpha}$ matrix $V_{ij}$. Hence, $\sum y_{i}(\sum
_{p=1}^{d_{\alpha}}V_{ij}^{pp})y_{j}^{\ast}=0.$ Let $d_{ij}=\sum
_{p=1}^{d_{\alpha}}V_{ij}^{pp}$. Therefore, if $Y\in B_{2}^{\delta}(u^{\beta
})$ is as before and $D=\sum_{i,j=1}^{d_{\beta}}d_{ij}\otimes m_{ij}^{\beta}$,
we have $YDY^{\ast}=0$. By Remark \ref{rem2.1} (4) the matrices $Y\in
B_{2}^{\delta}(u^{\beta})$ that have only one non zero row, span
$B_{2}^{\delta}(u^{\beta})$ linearly. Therefore, $YDY^{\ast}=0$ for every
$Y\in B_{2}^{\delta}(u^{\beta})$. Since $Z\geq0$ it follows that $V\geq0$ and
so $D\geq0$. Therefore $YD=0$ for every $Y\in B_{2}^{\delta}(u^{\beta})$.
Notice that $V=\sum_{i,j}V_{ij}\otimes m_{ij}^{\beta}$ satisfies Formula
\ref{formula2} with $u^{\alpha}$ replaced by $u_{1}^{\alpha}$. This fact will
be used in the proof of the next Claim.

Claim: $D\in(B\otimes\mathcal{M}_{d_{\beta}})^{\delta_{u^{\beta}}}$.

The proof of the claim will be achieved in two steps:

Step 1: We prove that%

\begin{equation}
\label{eqn}d_{ij}\otimes1_{A}=\sum_{p=1}^{d_{\alpha}}[(1_{B}\otimes
u_{1}^{\alpha})^{\ast}(V_{ij}\otimes1_{A})(1_{B}\otimes u_{1}^{\alpha})]_{qq},
\end{equation}
where $[(1_{B}\otimes u_{1}^{\alpha})^{\ast}(V_{ij}\otimes1_{A})(1_{B}\otimes
u_{1}^{\alpha})]_{qq}$ denotes the entry $qq$ of the matrix $(1_{B}\otimes
u_{1}^{\alpha})^{\ast}(V_{ij}\otimes1_{A})(1_{B}\otimes u_{1}^{\alpha})$,
$q=1,2...d_{\alpha}$. Tedious but straightforward calculations show that the
right hand side of the above formula is%

\begin{align*}
&  \sum_{q=1}^{d_{\alpha}}[(1_{B}\otimes u_{1}^{\alpha})^{\ast}(V_{ij}%
\otimes1_{A})(1_{B}\otimes u_{1}^{\alpha})]_{qq}\\
&  = \sum_{q}[\sum_{p,n,r,s,l,k}V_{ij}^{pn}\otimes m_{rs}^{\alpha}%
m_{pn}^{\alpha}m_{lk}^{\alpha}\otimes(u_{1}^{\alpha})_{sr}^{\ast}%
(u_{1}^{\alpha})_{lk}]_{qq}\\
&  =\sum_{q}[\sum_{p,n,r,s}V_{ij}^{pn}\otimes\delta_{sp}\delta_{nl}%
m_{rk}^{\alpha}\otimes(u_{1}^{\alpha})_{sr}^{\ast}(u_{1}^{\alpha})_{lk}%
]_{qq}\\
&  =\sum_{q}\sum_{p,n,r,s}\delta_{qr}\delta_{qk}V_{ij}^{pn}\otimes
(u_{1}^{\alpha})_{pr}^{\ast}(u_{1}^{\alpha})_{nk}\\
&  =\sum_{p,n}V_{ij}^{pn}\otimes(\sum_{q}(u_{1}^{\alpha})_{pq}^{\ast}%
(u_{1}^{\alpha})_{nq})=\sum_{p,n}V_{ij}^{pn}\otimes\delta_{pn}1_{A}%
\end{align*}

This last equality holds because we assumed that $\overline{u_{1}^{\alpha}}$
is unitary. Therefore:%

\[
\sum_{q=1}^{d_{\alpha}}[(1_{B}\otimes u_{1}^{\alpha})^{\ast}(V_{ij}%
\otimes1_{A})(1_{B}\otimes u_{1}^{\alpha})_{qq}=\sum_{p,n}V_{ij}^{pn}%
\otimes\delta_{pn}1_{A}=\sum V_{ij}^{pp}\otimes1_{A}=d_{ij}\otimes1_{A},
\]
and the Step 1 is proven.

Step 2: Proof of Claim. We have to prove that:%

\begin{equation}
\label{formula5}\delta_{13}(\sum_{i,j}d_{ij}\otimes m_{ij}^{\beta}%
)=(1_{B}\otimes u^{\beta})^{\ast}(\sum_{i,j}d_{ij}\otimes m_{ij}^{\beta
}\otimes1_{A})(1_{B}\otimes u^{\beta})
\end{equation}

We will evaluate separately the right and left hand sides of Formula
\ref{formula5} and show that they are the same. First, the right hand side:%

\begin{align}
&  (1_{B}\otimes u^{\beta})^{\ast}(\sum_{i,j}d_{ij}\otimes m_{ij}^{\beta
}\otimes1_{A})(1_{B}\otimes u^{\beta})\label{formulan-1}\\
&  =\sum_{q,p,i,j,u,v}(1_{B}\otimes m_{qp}^{\beta}\otimes u_{pq}^{\beta\ast
})(d_{ij}\otimes m_{ij}^{\beta}\otimes1_{A})(1_{B}\otimes m_{uv}^{\beta
}\otimes u_{uv}^{\beta})\nonumber\\
&  =\sum_{q,p,i,j,u,v}d_{ij}\otimes m_{qp}^{\beta}m_{ij}^{\beta}m_{uv}^{\beta
}\otimes u_{pq}^{\beta\ast}u_{uv}^{\beta}\nonumber\\
&  =\sum_{q,p,i,j,u,v}d_{ij}\otimes\delta_{pi}\delta_{ju}m_{qv}^{\beta}\otimes
u_{pq}^{\beta\ast}u_{uv}^{\beta}=\sum_{q,i,j,v}d_{ij}\otimes m_{qv}^{\beta
}\otimes u_{iq}^{\beta\ast}u_{jv}^{\beta}\nonumber
\end{align}

Next we will calculate the left hand side of Formula \ref{formula5}. As
noticed above,by multiplying Formula \ref{formula2} above by $1_{B}\otimes
S^{\ast}\otimes I_{d_{\beta}}\otimes1_{A}$ to the left and by $1_{B}\otimes
S\otimes I_{d_{\beta}}\otimes1_{A}$ to the right, and if we denote $V=\sum
V_{ij}\otimes m_{ij}^{\beta}$, we get%

\[
\delta_{14}(V)=(1_{B}\otimes(u_{1}^{\alpha}\odot u^{\beta})^{\ast}%
)(V\otimes1_{A})(1_{B}\otimes(u_{1}^{\alpha}\odot u^{\beta})).
\]

Therefore:%

\begin{align*}
\delta_{14}(V) &  =\sum_{r,s,k,l,i,j,p,q,t,u,v,w}(1_{B}\otimes m_{rs}^{\alpha
}\otimes m_{kl}^{\beta}\otimes u_{lk}^{\beta\ast}(u_{1}^{\alpha})_{sr}^{\ast
})(V_{ij}^{pq}\otimes m_{pq}^{\alpha}\otimes m_{ij}^{\beta}\otimes1_{A}%
)\times\\
&  (1_{B}\otimes m_{tu}^{\alpha}\otimes m_{vw}^{\beta}\otimes(u_{1}^{\alpha
})_{tu}u_{vw}^{\beta})\\
&  =\sum V_{ij}^{pq}\otimes\delta_{sp}\delta_{qt}m_{ru}^{\alpha}\otimes
\delta_{li}\delta_{jv}m_{kw}^{\beta}\otimes u_{lk}^{\beta\ast}(u_{1}^{\alpha
})_{sr}^{\ast}(u_{1}^{\alpha})_{tu}u_{vw}^{\beta}\\
&  =\sum V_{ij}^{pq}\otimes m_{ru}^{\alpha}\otimes m_{kw}^{\beta}\otimes
u_{ik}^{\beta\ast}(u_{1}^{\alpha})_{pr}^{\ast}(u_{1}^{\alpha})_{qu}%
u_{jw}^{\beta}%
\end{align*}

Hence, if $k=i_{0}$ and $w=j_{0}$ we get:%

\[
\delta_{14}(V_{i_{0}j_{0}}\otimes m_{i_{0}j_{0}}^{\beta})=\sum_{p,q,r,u,i,j}%
V_{ij}^{pq}\otimes m_{ru}^{\alpha}\otimes m_{i_{0}j_{0}}^{\beta}\otimes
u_{ii_{0}}^{\beta\ast}(u_{1}^{\alpha})_{pr}^{\ast}(u_{1}^{\alpha}%
)_{qu}u_{jj_{0}}^{\beta}%
\]

and if $r=u=l$,%

\[
\delta_{14}(V_{i_{0}j_{0}}^{ll}\otimes m_{ll}^{\alpha}\otimes m_{i_{0}j_{0}%
}^{\beta})=\sum_{p,q,i,j}V_{ij}^{pq}\otimes m_{ll}^{\alpha}\otimes
m_{i_{0}j_{0}}^{\beta}\otimes u_{ii_{0}}^{\beta\ast}(u_{1}^{\alpha}%
)_{pl}^{\ast}(u_{1}^{\alpha})_{ql}u_{jj_{0}}^{\beta}.
\]

Therefore:%

\[
\delta_{13}(d_{i_{0}j_{0}}\otimes m_{i_{0}j_{0}}^{\beta})=\sum_{p,q,i,j}%
V_{ij}^{pq}\otimes m_{i_{0}j_{0}}^{\beta}\otimes u_{ii_{0}}^{\beta\ast}%
(\sum_{l=1}^{d_{\alpha}}(u_{1}^{\alpha})_{pl}^{\ast}(u_{1}^{\alpha}%
)_{ql})u_{jj_{0}}^{\beta}%
\]

Since $\overline{u_{1}^{\alpha}}$ is a unitary representation, we have
$\sum_{l=1}^{d_{\alpha}}(u_{1}^{\alpha})_{pl}^{\ast}(u_{1}^{\alpha}%
)_{ql}=\delta_{pq}$ where, as usual, $\delta_{pq}$ is the Kronecker symbol. Hence:%

\[
\delta_{13}(d_{i_{0}j_{0}}\otimes m_{i_{0}j_{0}}^{\beta})=\sum_{i,j}%
d_{ij}\otimes m_{i_{0}j_{0}}^{\beta}\otimes u_{ii_{0}}^{\beta\ast}u_{jj_{0}%
}^{\beta}%
\]

Thus:%

\begin{equation}
\label{formulan}\delta_{13}(D)=\sum_{i,j,i_{0}j_{0}}d_{ij}\otimes
m_{i_{0}j_{0}}^{\beta}\otimes u_{ii_{0}}^{\beta\ast}u_{jj_{0}}^{\beta}%
\end{equation}

Formulas \ref{formulan} and \ref{formulan-1} show that the Claim is true.

Since $\beta\in Sp(\delta),D\in(B\otimes\mathcal{M}_{d_{\beta}})^{\delta
_{u^{\beta}}}$ and $YD=0$ for every $Y\in B_{2}^{\delta}(u^{\beta})$, it
follows that $D=0$. This means in particular that all the diagonal entries of
the matrix $V$ are equal to $0$. Since $V\geq0$, it follows that $V=0$ and
thus $Z=0$. Therefore $linspan\{(X\odot Y)^{\ast}(X\odot Y)|X\in B_{2}%
^{\delta}(u^{\alpha}),Y\in B_{2}^{\delta}(u^{\beta})\}$ is an essential ideal
of $(B\otimes\mathcal{M}_{d_{\alpha}d_{\beta}})^{\delta_{u^{\alpha}\odot
u^{\beta}}}$ as claimed.

Let now $u^{\alpha}\odot u^{\beta}\widetilde{=}\sum_{i}^{\oplus}u^{\rho_{i}}$
where $\rho_{i}$ are irreducible. Then, by Remark \ref{odot}(1) above, it
follows that $(B\otimes\mathcal{M}_{d_{\alpha}d_{\beta}})^{\delta_{u^{\alpha
}\odot u^{\beta}}}$ is spatially $\ast$-isomorphic to $\sum^{\oplus}%
(B\otimes\mathcal{M}_{d_{\rho_{i}}})^{\delta_{\rho_{i}}}$. Thus, since
$B_{2}^{\delta}(u^{\alpha}\odot u^{\beta})$ is spatially isomorphic to
$\sum^{\oplus}B_{2}^{\delta}(\rho_{i})$ (Remark \ref{odot}(2) above), it
follows that $B_{2}^{\delta}(\rho_{i})^{\ast}B_{2}^{\delta}(\rho_{i})$ is an
essential ideal of $(B\otimes\mathcal{M}_{d_{\rho_{i}}})^{\delta_{\rho_{i}}}$,
for all $i$. Therefore $\rho_{i}\in Sp(\delta)$, for every $i$. Thus
$Sp(\delta|_{C})$ is closed under tensor products for every $C\in
\mathcal{H}^{\delta}(B)$ the lemma is proven.
\end{proof}

We can now state:

\begin{proposition}\label{connesclosed}
The Connes spectrum, $\Gamma(\delta)$ is closed under tensor products.
\end{proposition}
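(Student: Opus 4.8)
The plan is to reduce the closedness of the Connes spectrum $\Gamma(\delta)$ under tensor products to the corresponding statement for the Arveson spectrum $Sp(\delta|_C)$, which was just established in the preceding lemma. Recall from Definition \ref{arvconnesspectra} that $\Gamma(\delta)=\bigcap_{C\in\mathcal{H}^{\delta}(B)}Sp(\delta|_{C})$. So the essential observation is that an intersection of subsets of $\widehat{G}$, each of which is closed under tensor products, is itself closed under tensor products, provided the tensor-product closure condition is a pointwise one (it asks only that certain irreducible components lie in the set, and these components do not depend on $C$).

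Concretely, I would argue as follows. Let $\alpha,\beta\in\Gamma(\delta)$ and fix unitary representatives $u^{\alpha}\in\alpha$, $u^{\beta}\in\beta$; let $\rho$ be any irreducible component of $u^{\alpha}\odot u^{\beta}$. For each $C\in\mathcal{H}^{\delta}(B)$ we have $\alpha,\beta\in Sp(\delta|_{C})$ by definition of $\Gamma(\delta)$. Since the preceding lemma shows that $Sp(\delta|_{C})$ is closed under tensor products for every $C\in\mathcal{H}^{\delta}(B)$, it follows that $\rho\in Sp(\delta|_{C})$. As $C$ was an arbitrary element of $\mathcal{H}^{\delta}(B)$, we conclude $\rho\in\bigcap_{C\in\mathcal{H}^{\delta}(B)}Sp(\delta|_{C})=\Gamma(\delta)$. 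Hence every irreducible component of $u^{\alpha}\odot u^{\beta}$ lies in $\Gamma(\delta)$, which is exactly the assertion that $\Gamma(\delta)$ is closed under tensor products.

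There is essentially no obstacle here, and the proof is of the same flavor as Proposition \ref{strongconnesclosed}, where $\tilde{\Gamma}(\delta)$ is handled by the identical intersection argument from its definition as $\bigcap_{C}\tilde{Sp}(\delta|_{C})$. The one point worth keeping straight is that the quantifier ``every irreducible component of $u^{\alpha}\odot u^{\beta}$ belongs to the set'' is the same defining condition for each $C$, and the decomposition $u^{\alpha}\odot u^{\beta}\widetilde{=}\sum_{i}^{\oplus}u^{\rho_{i}}$ is intrinsic to $G$ and independent of the subalgebra $C$; thus passing to the intersection is unproblematic. Since all the hard analytic work — the construction of the diagonal element $D$, the verification that $D\in(B\otimes\mathcal{M}_{d_{\beta}})^{\delta_{u^{\beta}}}$, and the essentiality of $linspan\{(X\odot Y)^{\ast}(X\odot Y)\}$ — has already been carried out in the lemma, the proposition is purely formal. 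I would therefore simply write ``Obvious, since $\Gamma(\delta)=\bigcap_{C\in\mathcal{H}^{\delta}(B)}Sp(\delta|_{C})$ and each $Sp(\delta|_{C})$ is closed under tensor products by the preceding lemma,'' mirroring the brief proof already given for Proposition \ref{strongconnesclosed}.
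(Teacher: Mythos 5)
Your proof is correct and is essentially the paper's own argument: the paper states Proposition \ref{connesclosed} as an immediate consequence of the preceding lemma (that $Sp(\delta|_{C})$ is closed under tensor products for every $C\in\mathcal{H}^{\delta}(B)$) together with the definition $\Gamma(\delta)=\underset{C\in\mathcal{H}^{\delta}(B)}{\cap}Sp(\delta|_{C})$, exactly mirroring the one-line proof of Proposition \ref{strongconnesclosed}. Your added remark that the irreducible components of $u^{\alpha}\odot u^{\beta}$ are intrinsic to $G$ and independent of $C$, which is what makes the intersection argument work, is a correct and worthwhile clarification.
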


\end{document}